\documentclass[12pt,oneside,reqno]{amsart}
\usepackage{}
\usepackage{amssymb}
\usepackage[colorlinks=true, linkcolor=blue, citecolor=blue]{hyperref}
\usepackage{bbm}
\usepackage{cases}
\usepackage{amsmath}
\usepackage{graphicx}
\usepackage{mathrsfs}
\usepackage{stmaryrd}
\usepackage{color}
\usepackage{soul}
\usepackage[dvipsnames]{xcolor}
\usepackage{amsfonts}
\usepackage{enumerate,amsmath,amssymb,amsthm}

\pagestyle{plain}
\textwidth=158 mm
\textheight=220 mm
\oddsidemargin=0mm
\topmargin=0mm
\numberwithin{equation}{section}

\newcommand{\be}{\begin{eqnarray}}
\newcommand{\mE}{\end{eqnarray}}
\newcommand{\ce}{\begin{eqnarray*}}
\newcommand{\de}{\end{eqnarray*}}
\newtheorem{theorem}{Theorem}[section]
\newtheorem{lemma}[theorem]{Lemma}
\newtheorem{remark}[theorem]{Remark}
\newtheorem{definition}[theorem]{Definition}
\newtheorem{proposition}[theorem]{Proposition}
\newtheorem{example}[theorem]{Example}
\newtheorem{corollary}[theorem]{Corollary}

\def\e{{\mathrm{e}}}
\def\eps{\varepsilon}

\def\p{\partial}

\def\[{{\Big[}}
\def\]{{\Big]}}
\def\<{{\langle}}
\def\>{{\rangle}}
\def\({{\Big(}}
\def\){{\Big)}}

\def\bx{{\mathbf{x}}}

\def\dif{{\mathord{{\rm d}}}}

\def\no{\nonumber}
\def\={&\!\!=\!\!&}
\def\bt{\begin{theorem}}
\def\et{\end{theorem}}
\def\bl{\begin{lemma}}
\def\el{\end{lemma}}
\def\br{\begin{remark}}
\def\er{\end{remark}}

\def\bd{\begin{definition}}
\def\ed{\end{definition}}
\def\bp{\begin{proposition}}
\def\ep{\end{proposition}}
\def\bc{\begin{corollary}}
\def\ec{\end{corollary}}
\def\bx{\begin{example}}
\def\ex{\end{example}}

\def\cI{{\mathcal I}}

\def\cK{{\mathcal K}}
\def\cL{{\mathcal L}}

\def\cP{{\mathcal P}}

\def\cU{{\mathcal U}}
\def\cV{{\mathcal V}}
\def\cW{{\mathcal W}}

\def\mC{{\mathbb C}}

\def\mE{{\mathbb E}}
\def\mF{{\mathbb F}}

\def\mI{{\mathbb I}}

\def\mN{{\mathbb N}}

\def\mP{{\mathbb P}}

\def\mR{{\mathbb R}}

\def\sF{{\mathscr F}}

\def\sJ{{\mathscr J}}
\def\sK{{\mathscr K}}
\def\sL{{\mathscr L}}

\def\sP{{\mathscr P}}

\def\geq{\geqslant}
\def\leq{\leqslant}

\def\T{\mathord{{\rm Tr}}}

\allowdisplaybreaks

\begin{document}

\title{Functional law of large numbers and central limit theorem for slow-fast McKean-Vlasov equations}

\date{}

\author{Yun Li  and Longjie Xie}

\address{Yun Li:
       Institute of Systems Science,
       Academy of Mathematics and Systems Science, Chinese Academy of Sciences, and School of Mathematics Sciences, University of Chinese Academy of Sciences, Beijing 100149, P.R.China\\
       Email: liyun@amss.ac.cn
}

\address{Longjie Xie:
	School of Mathematics and Statistics, Jiangsu Normal University,
	Xuzhou, Jiangsu 221000, P.R.China\\
	Email: longjiexie@jsnu.edu.cn
}

\thanks{
	This work is supported by NNSF of China (No. 12090011, 12071186,  11931004).
}

\begin{abstract}
In this paper, we study the asymptotic behavior of  a fully-coupled slow-fast McKean-Vlasov stochastic system. Using the non-linear Poisson equation on Wasserstein space, we first establish the strong convergence in the averaging principle of the functional law of large numbers type. In particular, the diffusion coefficient of the slow process can depend on the distribution of  the  fast motion. Then we consider the stochastic fluctuations of the original system around its average, and prove that the normalized difference will converge weakly to a linear McKean-Vlasov Ornstein-Uhlenbeck type process, which can be viewed as a functional central limit theorem. Extra drift and diffusion coefficients involving the expectation are characterized explicitly. Furthermore, the optimal rates of the convergence are also obtained.

	\bigskip
	
	\noindent {{\bf AMS 2010 Mathematics Subject Classification:} 60H10, 60F05, 70K70. }
	
	\bigskip
	\noindent{{\bf Keywords:} Averaging principle; central limit theorem; McKean-Vlasov equation; multi-scale system. }
\end{abstract}

\maketitle

\section{Introduction}

Consider the following slow-fast McKean-Vlasov stochastic differential equation (SDE for short)  in $\mR^{d_1}\times\mR^{d_2}$:
\begin{equation} \label{sde}
\left\{ \begin{aligned}
&\dif X^{\eps}_t =F(X^{\eps}_t,\cL_{X_t^\eps},Y^{\eps}_t,\cL_{Y^{\eps}_t})\dif t
+G(X^{\eps}_t,\cL_{X_t^\eps},\cL_{Y^{\eps}_t})\dif W^1_t,\qquad\qquad\quad X^{\eps}_0=\xi,\\
&\dif Y^{\eps}_t =\frac{1}{\eps}c(X^{\eps}_t,\cL_{X_t^\eps},Y^{\eps}_t,\cL_{Y^{\eps}_t})\dif t+\frac{1}{\eps^2}b(\cL_{X_t^\eps},Y^{\eps}_t,\cL_{Y^{\eps}_t})\dif t\\
&\qquad\quad+\frac{1}{\eps}\sigma_1(\cL_{X_t^\eps},Y^{\eps}_t,\cL_{Y^{\eps}_t})\dif W_t^1+\frac{1}{\eps}\sigma_2(\cL_{X_t^\eps},Y^{\eps}_t,\cL_{Y^{\eps}_t})\dif W_t^2,\quad\qquad   Y^{\eps}_0=\eta,
\end{aligned} \right.
\end{equation}
where $d_1, d_2\geq 1$,  $F: \mR^{d_1}\times\sP_2(\mR^{d_1})\times\mR^{d_2}\times\sP_2(\mR^{d_2})\to\mR^{d_1}$, $G: \mR^{d_1}\times\sP_2(\mR^{d_1})\times\sP_2(\mR^{d_2})\to\mR^{d_1}\otimes\mR^{d_1}$, $c: \mR^{d_1}\times\sP_2(\mR^{d_1})\times\mR^{d_2}\times\sP_2(\mR^{d_2})\to\mR^{d_2}$, $b: \sP_2(\mR^{d_1})\times\mR^{d_2}\times\sP_2(\mR^{d_2})\to\mR^{d_2}$, $\sigma_1: \sP_2(\mR^{d_1})\times\mR^{d_2}\times\sP_2(\mR^{d_2})\to\mR^{d_2}\otimes\mR^{d_1}$ and $\sigma_2: \sP_2(\mR^{d_1})\times\mR^{d_2}\times\sP_2(\mR^{d_2})\to\mR^{d_2}\otimes\mR^{d_2}$ are measurable functions, $W^1_t$, $W^2_t$ are $d_1$, $d_2$-dimensional independent standard Brownian motions both defined on some probability space $(\Omega,\sF,\mP)$, and $\xi$, $\eta$ are $d_1$, $d_2$-dimensional random variables, respectively. The small parameter $0<\eps\ll 1$ represents the separation of time scales between the slow component $X_t^\eps$ (which can be thought of as the mathematical model for
a phenomenon appearing at the natural time scale) and the fast motion $Y_t^\eps$ (which can be interpreted as the fast varying environment). Here and throughout, we denote by  $\cL_\varsigma$ the distribution of  a random
variable $\varsigma$, and
$\sP_2(\mR^{d})$ ($d\geq 1$) the space of all square integrable probability measures over $\mR^d$ equipped with the Wasserstein metric, i.e.,
$$
\cW_2(\mu_1,\mu_2):=\inf_{\pi\in\cP(\mu_1,\mu_2)}\left(\int_{\mR^d\times\mR^d}|x-y|^2\pi(\dif x,\dif y)\right)^{\frac{1}{2}},\quad \forall \mu_1,\mu_2\in \sP_2(\mR^{d}),
$$
where $\cP(\mu_1,\mu_2)$ is the class of measures on $\mR^d\times\mR^d$ with marginal $\mu_1$ and $\mu_2$.

\vspace{2mm}
The McKean-Vlasov SDE describes the limiting behavior of an individual particle involving within a system of particles interacting through their empirical measure, as the size of the population grows to infinity (the so-called propagation of chaos, see e.g. \cite{S}). A distinct feature is that the coefficients in  the equation depend not only on the solution process itself but also on its time marginal distributions. The pioneer work on such systems was indicated by Kac \cite{K} in kinetic theory and  McKean \cite{M} in the study of non-linear partial differential equations (PDEs for short). So far, the McKean-Vlasov SDEs have been investigated in various aspects such as well-posedness, connection with non-linear Fokker-Planck equations, large deviation  and numerical approximation, etc. We refer the readers to \cite{BH,BSY,SY,BR,CD,CF,CM,HW,MV,RZ} and the references therein.
Meanwhile, multi-scale models have wide range of applications  including climate weather interactions, intracellular biochemical reactions, geophysical fluid flows and stochastic volatility in finance, etc., and have been the central topic of study in science and engineering (see e.g. \cite{BYY,HL,PS,RX,RXY,S1,WR,X}). In particular,  multiple scales can leads to hysteresis loops in the bifurcation diagram and induce phase transitions of certain McKean-Vlasov equation as studied in \cite{DGP,GP}.

\vspace{2mm}Due to the widely separated  time scales and the cross interactions between the slow and fast
motions, the multi-scale McKean-Vlasov equations turn out to be more difficult to deal with. Hence, a simplified equation
which governs the evolution of the system over the long time scale is highly desirable. In this
direction, the theory of averaging principle provides a good approximation for the slow
component.
Existing averaging results for the multi-scale McKean-Vlasov SDEs can be found in \cite{BS1,BS2,BS3,HLL1,HLL2,RSX}. However, the coefficients of the systems considered in these works are not allowed to rely on the distribution of the fast motion. Recently, a system of weakly interacting diffusions in a two-scale potential relying on the faster empirical measure was considered in \cite{DGP}, the combined mean field and diffusive limits were investigated. The authors in \cite{LWX} considered the diffusion approximation of the multi-scale McKean-Vlasov SDEs by using a non-linear PDE as the corrector, where the coefficients  depend on the distributions of both the slow component
and the fast motion, yet only weak convergence is established therein.

\vspace{2mm}
In this paper, we shall first prove the strong convergence in the averaging principle for the system (\ref{sde}), see {\bf Theorem \ref{main1}} below. More precisely, we  show that as $\eps\to0$  the slow component $X_t^\eps$ will converge in $L^2(\Omega)$ to $\bar X_t$ which satisfies the following McKean-Vlasov equation:
\begin{align}\label{ave}
\dif \bar X_t&=\bar F(\bar X_t,\cL_{\bar X_t})\dif t+\bar G(\bar X_t,\cL_{\bar X_t})\dif W^1_t,\qquad  \bar X_0=\xi,
\end{align}
where the averaged coefficients are defined by
\begin{align}
\bar F(x,\mu)&:=\int_{\mR^{d_2}}F(x,\mu,y,\zeta^{\mu})\zeta^{\mu}(\dif y),\label{bF}\\
\bar G(x,\mu)&:=G(x,\mu,\zeta^{\mu}), \label{sigmaG}
\end{align}
and $\zeta^\mu(dy)$ is the unique invariant measure of the following parameterized McKean-Vlasov equation: for fixed $\mu\in\sP_2(\mR^{d_1})$,
\begin{align}\label{sde1}
\dif Y_t^{\mu,\eta}&=b(\mu,Y_t^{\mu,\eta},\cL_{Y^{\mu,\eta}_t})\dif t+\sigma_1(\mu,Y_t^{\mu,\eta},\cL_{Y^{\mu,\eta}_t})\dif \hat W_t^1\no\\
&\qquad\qquad\qquad\qquad\quad\,+\sigma_2(\mu,Y_t^{\mu,\eta},\cL_{Y^{\mu,\eta}_t})\dif \hat W_t^2,\quad Y_0^{\mu,\eta}=\eta,
\end{align}
where  $\hat W_t^1=\eps^{-1}W^1_{\eps^2t}$ and $\hat W_t^2=\eps^{-1}W^2_{\eps^2t}$ are two new independent Brownian motions. We point out that for the systems considered in \cite{BS1,BS2,BS3,HLL1,HLL2,RSX}, where the coefficients do not depend on the distribution of the fast motion, the equation (\ref{sde1}) then reduces to the classical It\^o's SDE (distribution-independent case), which is much easier to handle.  Moreover, it is interesting to note that we allow the diffusion coefficient $G$ in the slow process of system (\ref{sde}) to depend on the distribution of the fast variable, while it is well-known that in the theory of averaging principle of classical SDEs, the strong convergence   may not hold when the diffusion coefficient depends on the fast variable, see Remark \ref{br1} below.
This involves a delicate analysis on the convergence in the Wasserstein distance of the distribution of the fast process $Y_t^\eps$ to $\zeta^{\cL_{\bar X_t}}$. Note that in the definitions (\ref{bF}) and (\ref{sigmaG}) of the limit coefficients, we have freezed the $\nu$-measure variable of the coefficients $F$ and $G$  at the invariant measure $\zeta^\mu$.

\vspace{2mm}
Next, we proceed to study the small fluctuations of the slow component $X_t^\eps$  around its average $\bar X_t$, which is form of functional central limit type theorem. Namely,  we are interested in the asymptotic behavior of the normalized difference
$$
Z_t^\eps:=\frac{X_t^\eps-\bar X_t}{\eps}
$$
as $\eps\to0$.
We shall show that as $\eps\to0$, the deviation process $Z_t^\eps$ converges weakly to the  solution of a linear Ornstein-Uhlenbeck type McKean-Vlasov SDE, see equation (\ref{lim}) and {\bf Theorem \ref{main2}} below.
In particular,  the average effect of the drift term $c$ in the original system (\ref{sde}) will appear in the limit (even though it does not appear in the averaged equation (\ref{ave})). Meanwhile,  several extra drift and diffusion terms arise, which are explicitly characterized in terms of the  solution of an auxiliary non-linear Poisson equation  on the Wasserstein space. We provide two interesting particular cases to illustrate the result, see Remark \ref{br2} and  Example \ref{ex} below.

\vspace{2mm}
The rest of this paper is organized as follows. In Section 2, we state the main results.  In Section 3, we prepare some auxiliary results about the non-linear Poisson equation on the whole space and some a priori estimates. Section 4 is devoted to the proof of the strong convergence in the averaging principle. In Section 5, we establish a fluctuation lemma and then  give the proof of the functional central limit type theorem. Finally, an It\^o's formula is provided  in the Appendix for the sake of complicity.

\vspace{2mm}
\noindent{\bf Notations.}  To end this section, we introduce some notations. Throughout this paper, the letter $C$ with or without subscript denotes positive constant whose value may changes from line to line.
For brevity, we define
\begin{align}\label{op1}
\sL_1:=\sL_1(x,\mu,y,\nu):=\sum_{i=1}^{d_1}F_i(x,\mu,y,\nu)\frac{\p}{\p x_i}
+\frac{1}{2}\sum_{i,j=1}^{d_1}\big(GG^*(x,\mu,\nu)\big)_{ij}\frac{\p^2}{\p x_i\p x_j},
\end{align}
and
\begin{align}\label{op2}
\sL_2:=\sL_2(x,\mu,y,\nu):=\sum_{i=1}^{d_2}c_i(x,\mu,y,\nu)\frac{\p}{\p y_i}.
\end{align}
For a function $f(x,\mu,y,\nu)$ on $\mR^{d_1}\times\sP_2(\mR^{d_1})\times\mR^{d_2}\times\sP_2(\mR^{d_2})$, we say $f$ is Lipschitz continuous if
there exists a positive constant $L$ such that for every $x_1, x_2\in\mR^{d_1}$, $\mu_1,\mu_2\in\sP_2(\mR^{d_1})$, $y_1, y_2\in\mR^{d_2}$ and $\nu_1, \nu_2\in \sP_2(\mR^{d_2})$,
\begin{align*}
&|f(x_1,\mu_1,y_1,\nu_1)-f(x_2,\mu_2,y_2,\nu_2)|\\
&\leq L\big(|x_1-x_2|+|y_1-y_2|+\cW_2(\mu_1,\mu_2)+\cW_2(\nu_1,\nu_2)\big).
\end{align*}

\vspace{2mm}
Let us briefly recall the derivatives with respect to the measure variable introduced by Lions; see \cite[Section 6]{C} or \cite{Lions} for more details. The idea is to  consider  the canonical lift of a real-valued function $f: \sP_2(\mR^d)\to\mR$ into a function $\mF: L^2(\Omega)\ni X\mapsto f(\cL_X)\in\mR$. Using the Hilbert structure of the space $L^2(\Omega)$, the function $f$ is  said to be differentiable at $\mu\in \sP_2(\mR^d)$ if its canonical lift $\mF$ is
Fr\'echet differentiable at some point $X$ with $\cL_X=\mu$. By Riezs' representation theorem, the Fr\'echet derivative $D\mF(X)$, viewed as an element of $L^2(\Omega)$, can be given by a function $\p_\mu f(\mu)(\cdot): \mR^d\mapsto\mR^d$ such that
$$
D\mF(X)=\p_\mu f(\cL_X)(X).
$$
The function $\p_\mu f(\mu)(x)$ is then called the Lions derivative ($L$-derivative for short) of $f$ at $\mu$. Similarly, we can define the higher order derivatives of $f$ at $\mu$.

\vspace{2mm}

Let $d_1,d_2\geq 1$ and $k,\ell,m\in\mN=\{0,1,2,\cdots\}$. We introduce the following spaces of functions.

\begin{itemize}
  \item The space $C_b^{m,(k,\ell),2k,(k,k)}(\mR^{d_1}\times\sP_2(\mR^{d_1})\times\mR^{d_2}\times\sP_2(\mR^{d_2}))$. A function $f(x,\mu,y,\nu)$ is in $C_b^{m,(k,\ell),2k,(k,k)}(\mR^{d_1}\times\sP_2(\mR^{d_1})\times\mR^{d_2}\times\sP_2(\mR^{d_2}))$ if for any $(\mu,y,\nu)$, the mapping  $x\mapsto f(x,\mu,y,\nu)$ is in $C_b^{m}(\mR^{d_1})$, and for any $(x,y,\nu)$, the mapping  $\mu\mapsto f(x,\mu,y,\nu)$ is in $C_b^{(k,\ell)}(\sP_2(\mR^{d_1}))$, and for fixed $(x,\mu)$, the mapping $(y,\nu)\mapsto f(x,\mu,y,\nu)$ is in $C_b^{2k,(k,k)}(\mR^{d_2}\times\sP_2(\mR^{d_2}))$.

   \item The space $\mC_b^{(k,\ell),2k,(k,k)}(\sP_2(\mR^{d_1})\times\mR^{d_2}\times\sP_2(\mR^{d_2}))$. A function $f(\mu,y,\nu)$ is in $\mC_b^{(k,\ell),2k,(k,k)}(\sP_2(\mR^{d_1})\times\mR^{d_2}\times\sP_2(\mR^{d_2}))$ if $f\in C_b^{(k,\ell),2k,(k,k)}(\sP_2(\mR^{d_1})\times\mR^{d_2}\times\sP_2(\mR^{d_2}))$, and we can find a version of $\p^k_\mu f(\mu,y,\nu)(\tilde x_1,\cdots,\tilde x_k)$ such that the mapping $(y,\nu)\mapsto\p^\ell_{(\tilde x_1,\cdots,\tilde x_k)}\p^k_\mu f(\mu,y,\nu)(\tilde x_1,\cdots,\tilde x_k)$ is in $C_b^{2k,(k,k)}(\mR^{d_2}\times\sP_2(\mR^{d_2}))$.

  \item The space $\mC_b^{m,(k,\ell),2k,(k,k)}(\mR^{d_1}\times\sP_2(\mR^{d_1})\times\mR^{d_2}\times\sP_2(\mR^{d_2}))$. A function $f(x,\mu,y,\nu)$ is in $\mC_b^{m,(k,\ell),2k,(k,k)}(\mR^{d_1}\times\sP_2(\mR^{d_1})\times\mR^{d_2}\times\sP_2(\mR^{d_2}))$ if $f\in C_b^{m,(k,\ell),2k,(k,k)}(\mR^{d_1}\times\sP_2(\mR^{d_1})\times\mR^{d_2}\times\sP_2(\mR^{d_2}))$, and for every $x\in\mR^{d_1}$, the mapping $(\mu,y,\nu)\mapsto f(x,\mu,y,\nu)$ is in  $\mC_b^{(k,\ell),2k,(k,k)}
     (\sP_2(\mR^{d_1})\times\mR^{d_2}\times\sP_2(\mR^{d_2}))$, the mapping $(\mu,y,\nu)\mapsto\p_x^m f(x,\mu,y,\nu)$ is in $\mC_b^{(k,\ell),2k,(k,k)}(\sP_2(\mR^{d_1})\times\mR^{d_2}\times\sP_2(\mR^{d_2}))$.
\end{itemize}

\section{Statement of main results}

Let us first introduce some basic assumptions.  Throughout this paper, we assume the
following condition holds:

\vspace{2mm}
\noindent{\bf (H$^{\sigma,b}$):} there exist constants $c_2>c_1\geq 0$ such that for every $\mu\in\sP_2(\mR^{d_1})$, $y_1, y_2\in\mR^{d_2}$ and $\nu_1, \nu_2\in \sP_2(\mR^{d_2})$,
\begin{align*}
&3\big(\|\sigma_1(\mu,y_1,\nu_1)-\sigma_1(\mu,y_2,\nu_2)\|^2+\|\sigma_2(\mu,y_1,\nu_1)-\sigma_2(\mu,y_2,\nu_2)\|^2\big)\\
&+2\<b(\mu,y_1,\nu_1)-b(\mu,y_2,\nu_2),y_1-y_2\>\leq c_1\cW_2(\nu_1,\nu_2)^2-c_2|y_1-y_2|^2.
\end{align*}

Recall that for fixed $\mu\in\sP_2(\mR^{d_1})$, $Y_t^{\mu,\eta}$ satisfies the McKean-Vlasov equation (\ref{sde1}). It turns out that the distribution of $Y_t^{\mu,\eta}$ only depends on $\eta$ through its distribution $\cL_\eta=\nu$. Thus, for a given measure $\nu\in\sP_2(\mR^{d_2})$, we can define a (non-linear) semigroup $\{P_t^{\mu,*}\}_{t\geq0}$ on $\sP_2(\mR^{d_2})$ by letting
$$
P_{t}^{\mu,*}\nu:=\cL_{Y_{t}^{\mu,\eta}}\quad {\text {with}}\quad \cL_\eta=\nu.
$$
We say that a probability measure $\zeta^\mu$ is  an invariant measure of the McKean-Vlasov equation (\ref{sde1}) or the process $Y_t^{\mu,\eta}$ if
$$
P_t^{\mu,*}\zeta^\mu=\zeta^\mu,\quad \forall t\geq 0.
$$
Under assumption {\bf (H$^{\sigma,b}$)}, it is known that (see e.g. \cite[Theorem 3.1]{W1}) there exists a unique invariant measure $\zeta^\mu(\dif y)$ for the equation (\ref{sde1}). Moreover, there exist constants $C_0,\lambda_0>0$ such that for every $\nu\in\sP_2(\mR^{d_2})$,
\begin{align}\label{exp}
\cW_2(P_t^{\mu,*}\nu,\zeta^\mu)\leq C_0\,\e^{-\lambda_0t}\,\cW_2(\nu,\zeta^\mu).
\end{align}

\vspace{2mm}
The following is the first main result of this paper.

\bt\label{main1}
{\rm(Strong convergence)}. Let  {\bf (H$^{\sigma,b}$)} holds. Assume that $G$ and $c$ are Lipschitz continuous, $F\in C_b^{2,(1,1),2,(1,1)}(\mR^{d_1}\times\sP_2(\mR^{d_1})\times\mR^{d_2}\times\sP_2(\mR^{d_2}))$ with  $\p_xF(x,\mu,\cdot,\nu)\in C_b^1(\mR^{d_2})$ and $\sigma_1,\sigma_2,b\in C_b^{(1,1),2,(1,1)}( \sP_2(\mR^{d_1})\times\mR^{d_2}\times\sP_2(\mR^{d_2}))$. Then we have for any $T>0$,
\begin{align*}
\sup_{t\in[0,T]}\mE|X_t^\eps-\bar X_t|^2\leq C_T\,\eps^2,
\end{align*}
where $X_t^\eps$ and $\bar X_t$ satisfy the McKean-Vlasov equation (\ref{sde}) and (\ref{ave}), respectively, and $C_T>0$ is a constant independent of $\eps$.
\et

\br\label{br1}
(i) We shall show that the averaged coefficients $\bar F$ defined in (\ref{bF})  and $\bar G$ defined in (\ref{sigmaG}) are Lipschitz continuous with respect to $(x,\mu)$ (see Lemma \ref{Y3} below). Thus, there exists a unique strong solution $\bar X_t$ to the averaged McKean-Vlasov equation (\ref{ave}).

\vspace{2mm}
(ii) In the theory of the averaging principle of classical SDEs (i.e., when the coefficients in system (\ref{sde}) do not depend on the distribution of the solution), counter example is known which shows that the strong convergence does not hold when the diffusion coefficient $G$ in the slow variable depends on the fast motion $Y_t^\eps$, see e.g. \cite{L}. Here, we show that the diffusion coefficient $G$ can depend on the distribution $\cL_{Y_t^\eps}$ of the fast motion. This involves the convergence in Wasserstein distance of the distribution of $Y_t^\eps$.
\er

Next, we proceed to identify the limit of the normalized difference
\begin{align}\label{z}
Z_t^\eps:=\frac{X_t^\eps-\bar X_t}{\eps}
\end{align}
as $\eps\to0$. For this,
we assume
\begin{align}\label{G}
G(x,\mu,\nu)\equiv G(x,\mu).
\end{align}
As a result, we have
\begin{align*}
Z_t^\eps&=\frac{1}{\eps}\int_0^t\Big[F(X^{\eps}_s,\cL_{X_s^\eps},Y^{\eps}_s,\cL_{Y^{\eps}_s})-\bar F(\bar X_s,\cL_{\bar X_s})\Big]\dif s\\
&\quad+\frac{1}{\eps}\int_0^t\Big[G(X^{\eps}_s,\cL_{X_s^\eps})-G(\bar X_s,\cL_{\bar X_s})\Big]\dif W^1_s,
\end{align*}
where $\bar F(x,\mu)$ is given by (\ref{bF}). To introduce the limit $\bar Z_t$ of $Z_t^\eps$, we need to consider the following Poisson equation on $\mR^{d_2}\times\sP_2(\mR^{d_2})$:
\begin{align}\label{pss2}
\sL_0(\mu,y,\nu)\Phi(x,\mu,y,\nu)=-[F(x,\mu,y,\nu)-\bar F(x,\mu)]=:-\delta F(x,\mu,y,\nu),
\end{align}
where $(x,\mu)\in\mR^{d_1}\times\sP_2(\mR^{d_1})$ are regarded as parameters, and for a test function $\varphi(y,\nu)$, the operator $\sL_0$ is defined by
\begin{align}\label{op}
&\sL_0\varphi(y,\nu):=\sL_0(\mu,y,\nu)\varphi(y,\nu)\no\\
&:=\frac{1}{2}\T\big(a(\mu,y,\nu)\cdot\p^2_y\varphi(y,\nu)\big)+b(\mu,y,\nu)\cdot\p_y\varphi(y,\nu)\no\\
&\quad+\int_{\mR^{d_2}}\Big[\frac{1}{2}\T\big(a(\mu,\tilde y,\nu)\cdot\p_{\tilde y}[\p_\nu\varphi(y,\nu)(\tilde y)]\big)+b(\mu,\tilde y,\nu)\cdot\p_\nu\varphi(y,\nu)(\tilde y)\Big]\nu(\dif \tilde y),
\end{align}
with $a(\mu,y,\nu)=[\sigma_1\sigma_1^*+\sigma_2\sigma_2^*](\mu,y,\nu)$.
In fact, the operator $\sL_0$ can be viewed as the infinitesimal generator of the parameterized  McKean-Vlasov SDE (\ref{sde1}). Note that the equation (\ref{pss2}) is totally non-linear due to the existence of the integral part with respect to the measure $\nu$ in (\ref{op}).
According to Theorem \ref{pfs} below, there exists a unique solution $\Phi(x,\mu,y,\nu)$ to the equation (\ref{pss2}). Let us define
\begin{align}
\overline{c\cdot\p_y\Phi}(x,\mu)&:=\int_{\mR^{d_2}}c(x,\mu,y,\zeta^{\mu})\cdot\p_y\Phi(x,\mu,y,\zeta^\mu)\zeta^\mu(\dif y),\label{bc1}\\
\overline{\overline{c\cdot\p_\nu\Phi}}(x,\mu)(\tilde x)&:=\int_{\mR^{d_2}}\!\!\int_{\mR^{d_2}}\!\!c(\tilde x,\mu,\tilde y,\zeta^{\mu})\cdot\p_\nu\Phi(x,\mu,y,\zeta^\mu)(\tilde y)\zeta^\mu(\dif\tilde y)\zeta^\mu(\dif y),\label{bc2}\\
\overline{\sigma_1^*\cdot\p_x\p_y\Phi}(x,\mu)&:=\int_{\mR^{d_2}}\sigma_1^*(\mu,y,\zeta^\mu)
\cdot\p_x\p_y\Phi(x,\mu,y,\zeta^\mu)\zeta^\mu(\dif y),\label{bG}\\
\overline{\p_y\Phi\cdot\sigma_1}(x,\mu)&:=\int_{\mR^{d_2}}\p_y\Phi(x,\mu,y,\zeta^\mu)\cdot\sigma_1(\mu,y,\zeta^\mu)\zeta^\mu(\dif y),\label{sigma1}\\
\overline{(\p_y\Phi\cdot\sigma_1)(\p_y\Phi\cdot\sigma_1)^*}(x,\mu)
&:=\int_{\mR^{d_2}}
\Big(\p_y\Phi(x,\mu,y,\zeta^\mu)\cdot\sigma_1(\mu,y,\zeta^\mu)\Big)\no\\
&\qquad\qquad\Big(\p_y\Phi(x,\mu,y,\zeta^\mu)\cdot\sigma_1(\mu,y,\zeta^\mu)\Big)^*\zeta^\mu(\dif y),\label{sigma11}
\end{align}
and
\begin{align}
\overline{(\p_y\Phi\cdot\sigma_2)(\p_y\Phi\cdot\sigma_2)^*}(x,\mu)
&:=\int_{\mR^{d_2}}
\Big(\p_y\Phi(x,\mu,y,\zeta^\mu)\cdot\sigma_2(\mu,y,\zeta^\mu)\Big)\no\\
&\qquad\qquad\Big(\p_y\Phi(x,\mu,y,\zeta^\mu)\cdot\sigma_2(\mu,y,\zeta^\mu)\Big)^*\zeta^\mu(\dif y).\label{sigma2}
\end{align}
Then the limit $\bar Z_t$ for the deviation process $Z_t^\eps$ turns out to satisfy the following McKean-Vlasov SDE:
\begin{align}\label{lim}
\dif \bar Z_t&=\p_x\bar F(\bar X_t,\cL_{\bar X_t})\bar Z_t\dif t+\tilde\mE\big[\p_\mu\bar F(\bar X_t,\cL_{\bar X_t})(\tilde{\bar X}_t)\tilde{\bar Z}_t\big]\dif t\no\\
&\quad+\overline{c\cdot\p_y\Phi}(\bar X_t,\cL_{\bar X_t})\dif t+\tilde\mE\left[\overline{\overline{c\cdot\p_\nu\Phi}}(\bar X_t,\cL_{\bar X_t})(\tilde{\bar X}_t)\right]\dif t\no\\
&\quad+\overline{\sigma_1^*\cdot\p_x\p_y\Phi}(\bar X_t,\cL_{\bar X_t})\cdot G(\bar X_t,\cL_{\bar X_t})\dif t\no\\
&\quad+\p_xG(\bar X_t,\cL_{\bar X_t})\bar Z_t\dif W_t^1+\tilde\mE\big[\p_\mu G(\bar X_t,\cL_{\bar X_t})(\tilde{\bar X}_t)\tilde{\bar Z}_t\big]\dif W_t^1\no\\
&\quad+\overline{\p_y\Phi\cdot\sigma_1}(\bar X_t,\cL_{\bar X_t})\dif W_t^1+\sqrt{\Sigma(\bar X_t,\cL_{\bar X_t})}\dif \tilde W_t,\qquad \bar Z_0=0,
\end{align}
where $\tilde W_t$ is another Brownian motion independent of $W_t^1$, $\bar X_t$ is the unique strong solution of the averaged equation (\ref{ave}), the process $(\tilde{\bar X}_t,\tilde{\bar Z}_t)$ is a copy of $(\bar X_t,\bar Z_t)$ defined on a copy $(\tilde\Omega,\tilde\sF, \tilde{\mP})$ of the original probability space  $(\Omega,\sF, \mP)$, and $\tilde\mE$ is the expectation taken with respect to $\tilde\mP$.  The diffusion coefficient  $\Sigma(x,\mu)$ is defined by
\begin{align}\label{sigma}
\Sigma(x,\mu)&=\overline{(\p_y\Phi\cdot\sigma_1)(\p_y\Phi\cdot\sigma_1)^*}(x,\mu)
-\overline{\p_y\Phi\cdot\sigma_1}\cdot\overline{(\p_y\Phi\cdot\sigma_1)^*}(x,\mu)\no\\
&\quad+\overline{(\p_y\Phi\cdot\sigma_2)(\p_y\Phi\cdot\sigma_2)^*}(x,\mu).
\end{align}
 Note that the matrix $\Sigma(x,\mu)$ is always positive semi-definite.

\vspace{2mm}
The following is the second main result of this paper.
\bt\label{main2}
{\rm(Central limit theorem)}. Let assumptions  {\bf (H$^{\sigma,b}$)} and (\ref{G}) hold. Assume that $G\in\big(C_b^{4,(1,3)}\cap C_b^{4,(2,2)}\cap C_b^{4,(3,1)}\big)(\mR^{d_1}\times\sP_2(\mR^{d_1}))$, $c\in  \big(C_b^{3,(2,2),4,(2,2)}\cap\mC_b^{3,(1,2),2,(1,1)}\big)(\mR^{d_1}\times\sP_2(\mR^{d_1})\times\mR^{d_2}\times\sP_2(\mR^{d_2}))$, $\sigma_1,\sigma_2,b\in \big(C_b^{(3,1),6,(3,3)}\cap\mC_b^{(1,3),4,(2,2)}\cap\mC_b^{(2,2),2,(1,1)}\big)(\sP_2(\mR^{d_1})\times\mR^{d_2}\times\sP_2(\mR^{d_2}))$ and $F\in\big(C_b^{4,(3,1),6,(3,3)}\cap\mC_b^{4,(1,3),4,(2,2)}\cap\mC_b^{4,(2,2),2,(1,1)}\big)(\mR^{d_1}\times\sP_2(\mR^{d_1})\times\mR^{d_2}\times\sP_2(\mR^{d_2}))$. Then for any $T>0$ and $\varphi\in \big(C_b^{(1,3)}\cap C_b^{(2,2)}\cap C_b^{(3,1)}\big)(\sP_2(\mR^{d_1}))$, we have
\begin{align*}
\sup_{t\in[0,T]}\big|\varphi(\cL_{Z_t^\eps})-\varphi(\cL_{\bar Z_t})\big|\leq C_T\,\eps,
\end{align*}
where $\bar Z_t$ satisfies the McKean-Vlasov equation (\ref{lim}), and $C_T>0$ is a constant independent of $\eps$. In particular, we have for every $\phi\in C_b^{4}(\mR^{d_1})$,
\begin{align*}
\sup_{t\in[0,T]}\big|\mE\phi(Z_t^\eps)-\mE\phi(\bar Z_t)\big|\leq C_T\,\eps.
\end{align*}
\et

\br\label{br2}
(i) Note that the average effect of the term $c$ in the system (\ref{sde}) does not appear in the averaging principle of the law of large number (i.e., it does not appear in equation (\ref{ave})). However, in the central limit theorem it arises in the  equation (\ref{lim}). The expectation term involving $L$-derivative in the measure argument of the solution $\Phi$ in (\ref{lim})  is  due to the effect of the dependence on the fast distribution in the coefficients.

\vspace{1mm}
(ii) The terms involving $\sigma_1$ in (\ref{lim}) seem to be  new even for classical SDEs, which is due to the effect of the common noise.
\er

%If the fast equation is independent of the slow component and its distribution, and $c\equiv0$, then assumption (\ref{G}) in Theorem \ref{main2} can be removed. In particular, the limit $\bar Z_t$ is the solution of the following equation:
%\begin{align*}
%\dif \bar Z_t&=\p_x\bar F(\bar X_t,\cL_{\bar X_t})\bar Z_t\dif t+\tilde\mE\big[\p_\mu\bar F(\bar X_t,\cL_{\bar X_t})(\tilde{\bar X}_t)\tilde{\bar Z}_t\big]\dif t\\
%&\quad+\overline{(G\sigma_1^*)\cdot\p_x\p_y\Psi}(\bar X_t,\cL_{\bar X_t})\dif t\\
%&\quad+\p_x\bar G(\bar X_t,\cL_{\bar X_t})\bar Z_t\dif W_t^1+\tilde\mE\big[\p_\mu\bar G(\bar X_t,\cL_{\bar X_t})(\tilde{\bar X}_t)\tilde{\bar Z}_t\big]\dif W_t^1\\
%&\quad+\overline{\p_y\Psi\cdot\sigma_1}(\bar X_t,\cL_{\bar X_t})\dif W_t^1+\sqrt{\Sigma(\bar X_t,\cL_{\bar X_t})}\dif \tilde W_t.
%\end{align*}

\bx\label{ex}
When $G\equiv\mI_{d_1}$ (the identity matrix) in system (\ref{sde}), then the corresponding averaged equation (\ref{ave}) becomes
$$
\dif \bar X_t=\bar F(\bar X_t,\cL_{\bar X_t})\dif t+\dif W_t^1.
$$
As a result, we have
\begin{align*}
Z_t^\eps:&=\frac{X_t^\eps-\bar X_t}{\eps}=\frac{1}{\eps}\int_0^t\Big[F(X_s^\eps,\cL_{X_s^\eps},Y_s^\eps,\cL_{Y_s^\eps})-\bar F(\bar X_s,\cL_{\bar X_s})\Big]\dif s\\
&=\frac{1}{\eps}\int_0^t\Big[\bar F(X_s^\eps,\cL_{X_s^\eps})-\bar F(\bar X_s,\cL_{\bar X_s})\Big]\dif s\\
&\quad+\frac{1}{\eps}\int_0^t\Big[F(X_s^\eps,\cL_{X_s^\eps},Y_s^\eps,\cL_{Y_s^\eps})-\bar F(X_s^\eps,\cL_{X_s^\eps})\Big]\dif s=:\cI_1(\eps)+\cI_2(\eps).
\end{align*}
Let $\bar Z_t$ be the limit of $Z_t^\eps$. Then, at least formally,  we have by the mean value theorem that
$$
\cI_1(\eps)\to \int_0^t\Big[\p_x\bar F(\bar X_s,\cL_{\bar X_s})\bar Z_s+\tilde\mE\big[\p_\mu\bar F(\bar X_s,\cL_{\bar X_s})(\tilde{\bar X}_s)\tilde{\bar Z}_s\big]\Big]\dif s\quad\text{as}\quad\eps\to0.
$$
The limit for the second term $\cI_2(\eps)$ is far from being obvious. We provide two cases to illustrate the result.

(i) When $\sigma_2=\mI_{d_2}$ and $\sigma_1=c\equiv0$ in system (\ref{sde}), then according to Theorem \ref{main2}, we have
$$
\cI_2(\eps)\to\int_0^t\sqrt{\Sigma(\bar X_s,\cL_{\bar X_s})}\dif \tilde W_s,
$$
where $\tilde W_t$ is a new Brownian motion independent of $W_t^1$, and
$$
\Sigma(x,\mu):=\overline{(\p_y\Phi)(\p_y\Phi)^*}(x,\mu).
$$
Thus the limit $\bar Z_t$ satisfies the linear McKean-Vlasov equation
\begin{align*}
\dif \bar Z_t&=\p_x\bar F(\bar X_t,\cL_{\bar X_t})\bar Z_t\dif t+\tilde\mE\big[\p_\mu\bar F(\bar X_t,\cL_{\bar X_t})(\tilde{\bar X}_t)\tilde{\bar Z}_t\big]\dif t\\
&\quad+\sqrt{\Sigma(\bar X_t,\cL_{\bar X_t})}\dif \tilde W_t.
\end{align*}

(ii) When $\sigma_1=\mI_{d_1}$ (with common noise) and $\sigma_2=c\equiv0$, then according to Theorem \ref{main2}, we have
\begin{align*}
\cI_2(\eps)\to\int_0^t\overline{\p_x\p_y\Phi}(\bar X_s,\cL_{\bar X_s})\dif s+\int_0^t\overline{\p_y\Phi}(\bar X_s,\cL_{\bar X_s})\dif W^1_s+\int_0^t\sqrt{\Sigma(\bar X_s,\cL_{\bar X_s})}\dif \tilde W_s,
\end{align*}
where $\tilde W_t$ is a new Brownian motion independent of $W_t^1$, and
$$
\Sigma(x,\mu):=\overline{(\p_y\Phi)(\p_y\Phi)^*}(x,\mu)
-\overline{\p_y\Phi}\cdot\overline{(\p_y\Phi)^*}(x,\mu).
$$
Thus the limit $\bar Z_t$ satisfies
\begin{align*}
\dif \bar Z_t&=\p_x\bar F(\bar X_t,\cL_{\bar X_t})\bar Z_t\dif t+\tilde\mE\big[\p_\mu\bar F(\bar X_t,\cL_{\bar X_t})(\tilde{\bar X}_t)\tilde{\bar Z}_t\big]\dif t\\
&\quad+\overline{\p_x\p_y\Phi}(\bar X_t,\cL_{\bar X_t})\dif t+\overline{\p_y\Phi}(\bar X_t,\cL_{\bar X_t})\dif W^1_t+\sqrt{\Sigma(\bar X_t,\cL_{\bar X_t})}\dif \tilde W_t.
\end{align*}

\ex

\section{Poisson equation and auxiliary estimates}

In this section, we first recall some results about the Poisson equation on the Wasserstein space.  Then, we collect some a priori estimates that we shall use to prove our main results.

Consider the following Poisson equation on the whole space $\mR^{d_2}\times\sP_2(\mR^{d_2})$:
\begin{align}\label{pof}
\sL_0(\mu,y,\nu)U(x,\mu,y,\nu)=-f(x,\mu,y,\nu),
\end{align}
where $(x,\mu)\in\mR^{d_1}\times\sP_2(\mR^{d_1})$  are parameters, and the operator $\sL_0$ is defined by (\ref{op}).
In order to ensure the well-posedness of the equation (\ref{pof}), we need to assume that $f$ satisfies the following centering condition:
\begin{align}\label{cenf}
\int_{\mR^{d_2}}f(x,\mu,y,\zeta^{\mu})\zeta^{\mu}(\dif y)=0,\quad\forall (x,\mu)\in\mR^{d_1}\times\sP_2(\mR^{d_1}),
\end{align}
where $\zeta^\mu(dy)$ is the unique invariant measure of the frozen McKean-Vlasov equation (\ref{sde1}). Furthermore, we need to consider the following  de-coupled equation associated with (\ref{sde1}):
\begin{align}\label{sde2}
Y_t^{\mu,y,\nu}=y+\int_0^tb\big(\mu,Y_s^{\mu,y,\nu},\cL_{Y_s^{\mu,\eta}}\big)\dif s&+\int_0^t\sigma_1\big(\mu,Y_s^{\mu,y,\nu},\cL_{Y_s^{\mu,\eta}}\big)\dif  W^1_s\no\\
&+\int_0^t\sigma_2\big(\mu,Y_s^{\mu,y,\nu},\cL_{Y_s^{\mu,\eta}}\big)\dif  W^2_s,
\end{align}
with $\cL_\eta=\nu$.
The results below were proved in \cite[Theorems 2.3, 2.4]{LWX}, which will be used frequently in the sequel.

\bt\label{pfs}
Let  {\bf (H$^{\sigma,b}$)} hold,  $j, k, m, n\in\mN$, and the function $f$ satisfy the centering condition  (\ref{cenf}).

\vspace{2mm}
(i) Assume that for every $(x,\mu)\in\mR^{d_1}\times\sP_2(\mR^{d_1})$, $a(\mu,\cdot,\cdot),b(\mu,\cdot,\cdot),f(x,\mu,\cdot,\cdot)\in C_b^{2m,(m,m)}(\mR^{d_2}\times\sP_2(\mR^{d_2}))$. Then there exists a unique solution $U(x,\mu,\cdot,\cdot)\in C_b^{2m,(m,m)}(\mR^{d_2}\times\sP_2(\mR^{d_2}))$ to the equation (\ref{pof}), which also satisfies the centering condition (\ref{cenf}) and is given by
\begin{align}\label{pfu}
U(x,\mu,y,\nu)&=\mE\left(\int_0^\infty f\big(x,\mu,Y_t^{\mu,y,\nu},\cL_{Y_t^{\mu,\eta}}\big)\dif t\right),
\end{align}
where $Y_t^{\mu,\eta}$ and $Y_t^{\mu,y,\nu}$ satisfy equations (\ref{sde1}) and (\ref{sde2}) with $\cL_\eta=\nu$, respectively.

\vspace{2mm}
(ii) Assume that $a, b\in \big(C_b^{(m,k),2m,(m,m)}\cap \mC_b^{(n,k),2(m-n),(m-n,m-n)}\big)(\sP_2(\mR^{d_1})\times\mR^{d_2}\times\sP_2(\mR^{d_2}))$ and  $f\in \big(C_b^{j,(m,k),2m,(m,m)}\cap \mC_b^{j,(n,k),2(m-n),(m-n,m-n)}\big)(\mR^{d_1}\times\sP_2(\mR^{d_1})\times\mR^{d_2}\times\sP_2(\mR^{d_2}))$ with $0\leq n<m$. Then we have
\begin{align*}
U\in \big(C_b^{j,(m,k),2m,(m,m)}\cap \mC_b^{j,(n,k),2(m-n),(m-n,m-n)}\big)(\mR^{d_1}\times\sP_2(\mR^{d_1})\times\mR^{d_2}\times\sP_2(\mR^{d_2})).
\end{align*}
\et

Given a function $h(x,\mu,y,\nu)$, we denote by $\bar h(x,\mu)$ its average with respect to the invariant measure $\zeta^\mu(dy)$, i.e.,
\begin{align*}
\bar h(x,\mu):=\int_{\mR^{d_2}}h(x,\mu,y,\zeta^{\mu})\zeta^{\mu}(\dif y).
\end{align*}
As a direct application of Theorem \ref{pfs}, we have the following result which illustrates the regularity of an averaged function.

\bl\label{reh}
Assume that  {\bf (H$^{\sigma,b}$)} holds and  $\ell, m, k\in\mN$. If for every $1\leq n<m$, $a, b\in (C_b^{(m,k),2m,(m,m)}\cap \mC_b^{(n,k),2(m-n),(m-n,m-n)})(\sP_2(\mR^{d_1})\times\mR^{d_2}\times\sP_2(\mR^{d_2}))$ and  $h\in (C_b^{\ell,(m,k),2m,(m,m)}\cap \mC_b^{\ell,(n,k),2(m-n),(m-n,m-n)})(\mR^{d_1}\times\sP_2(\mR^{d_1})\times\mR^{d_2}\times\sP_2(\mR^{d_2}))$. Then we have
$\bar h\in C_b^{\ell,(m,k)}(\mR^{d_1}\times\sP_2(\mR^{d_1}))$. In particular,

\vspace{2mm}
\item (i) under the assumptions in Theorem \ref{main1}, we have $\bar F\in C_b^{2,(1,1)}$;

\vspace{2mm}
\item (ii) under the assumptions in Theorem \ref{main2}, we have
$\bar F\in C_b^{4,(1,3)}\cap C_b^{4,(2,2)}\cap C_b^{4,(3,1)}$,
and
$$\overline{c\cdot\p_y\Phi},~\overline{\sigma_1^*\cdot\p_x\p_y\Phi},~ \overline{\p_y\Phi\cdot\sigma_1},~ \overline{\delta F\cdot\Phi^*}\in C_b^{3,(2,2)},~~~\overline{\overline{c\cdot\p_\nu\Phi}}\in C_b^{3,(2,2),3},
$$
where the above functions are defined by (\ref{bc1})-(\ref{sigma2}).
\el
\begin{proof}
The conclusion that $\bar h\in C_b^{\ell,(m,k)}(\mR^{d_1}\times\sP_2(\mR^{d_1}))$ was proved in \cite[Corollary 2.5]{LWX}. Then under the assumptions in Theorem \ref{main1}, we take $\ell=2$ and $m=k=1$ in the above, the assertion that $\bar F\in C_b^{2,(1,1)}(\mR^{d_1}\times\sP_2(\mR^{d_1}))$ follows directly.   Similarly, under the assumptions in Theorem \ref{main2}, we deduce that $\bar F\in C_b^{4,(1,3)}\cap C_b^{4,(2,2)}\cap C_b^{4,(3,1)}$. Recall that $\Phi$ solves (\ref{pss2}). By Theorem \ref{pfs}, we get $\Phi\in C_b^{4,(3,1),6,(3,3)}\cap \mC_b^{4,(1,3),4,(2,2)}\cap \mC_b^{4,(2,2),2,(1,1)}$. This together with the conditions on $c$ and $\sigma_1$ implies that $\overline{c\cdot\p_y\Phi}$, $\overline{\sigma_1^*\cdot\p_x\p_y\Phi}, \overline{\p_y\Phi\cdot\sigma_1}, \overline{\delta F\cdot\Phi^*}\in C_b^{3,(2,2)}$ and $\overline{\overline{c\cdot\p_\nu\Phi}}\in C_b^{3,(2,2),3}$.
\end{proof}

Let $(X_t^\eps,Y_t^\eps)$ satisfy the McKean-Vlasov equation (\ref{sde}). By using the similar arguments as \cite[Lemma 3.1]{RSX}, we have the following  moment estimates of the process $(X_t^\eps,Y_t^\eps)$, the details of the proof are omitted.

\bl\label{Y1}
Let {\bf (H$^{\sigma,b}$)} hold. Assume that $F,G$ and $c$ are Lipschitz continuous. Then for any $T>0$, there exists a positive constant $C_T$ such that
\begin{align*}
\sup_{0<\eps\ll1}\mE\Big[\sup_{t\in[0,T]}|X^\eps_t|^4\Big]\leq C_T\big(1+\mE|\xi|^4+\mE|\eta|^4\big),
\end{align*}
and
\begin{align*}
\sup_{0<\eps\ll1}\sup_{t\in[0,T]}\mE|Y^\eps_t|^4\leq C_T\big(1+\mE|\xi|^4+\mE|\eta|^4\big).
\end{align*}
\el

Recall that $Y_t^{\mu,\eta}$ is the unique strong solution of the equation (\ref{sde1}). We have:
\bl\label{Y2}
Assume that {\bf (H$^{\sigma,b}$)} holds. Then we have for any $t>0$, $\mu,\mu_1,\mu_2\in\sP_2(\mR^{d_1})$ and $\eta\in L^2(\Omega)$,
\begin{align*}
\mE|Y^{\mu_1,\eta}_t-Y^{\mu_2,\eta}_t|^2\leq C_0\cW_2(\mu_1,\mu_2)^2,
\end{align*}
and
\begin{align*}
\mE|Y^{\mu,\eta}_t|^2\leq C_0\big(e^{-(c_2-c_1)t}\mE|\eta|^2+\cW_2(\mu,\delta_0)^2\big),
\end{align*}
where $C_0>0$ is a constant independent of $t$.
\el
\begin{proof}
Using  It\^{o}'s formula and by {\bf (H$^{\sigma,b}$)}, we have that
\begin{align*}
\dif \mE|Y^{\mu_1,\eta}_t-Y^{\mu_2,\eta}_t|^2&=
\mE\big[2\langle Y^{\mu_1,\eta}_t-Y^{\mu_2,\eta}_t, b(\mu_1,Y^{\mu_1,\eta}_t,\cL_{Y^{\mu_1,\eta}_t})-b(\mu_2,Y^{\mu_2,\eta}_t,\cL_{Y^{\mu_2,\eta}_t}) \rangle\\
&\quad+\|\sigma_1(\mu_1,Y^{\mu_1,\eta}_t,\cL_{Y^{\mu_1,\eta}_t})-\sigma_1(\mu_2,Y^{\mu_2,\eta}_t,\cL_{Y^{\mu_2,\eta}_t})\|^2\\
&\quad+\|\sigma_2(\mu_1,Y^{\mu_1,\eta}_t,\cL_{Y^{\mu_1,\eta}_t})-\sigma_2(\mu_2,Y^{\mu_2,\eta}_t,\cL_{Y^{\mu_2,\eta}_t})\|^2\big]\dif t\\
&\leq -(c_2-c_1)\mE|Y^{\mu_1,\eta}_t-Y^{\mu_2,\eta}_t|^2\dif t+C_0\cW_2(\mu_1,\mu_2)^2\dif t,
\end{align*}
which together with the comparison theorem implies
\begin{align*}
\mE|Y^{\mu_1,\eta}_t-Y^{\mu_2,\eta}_t|^{2}\leq C_0\,\cW_2(\mu_1,\mu_2)^2.
\end{align*}

In view of {\bf (H$^{\sigma,b}$)}, for every $\mu\in\sP_2(\mR^{d_1})$, $y\in\mR^{d_2}$ and $\nu\in\sP_2(\mR^{d_2})$, there exists $C_0>0$ such that
\begin{align*}
&2\langle y, b(\mu,y,\nu)\rangle+3\|\sigma_1(\mu,y,\nu)\|^2+3\|\sigma_2(\mu,y,\nu)\|^2\\
&\leq c_1\cW_2(\nu,\delta_{0})^2-c_2|y|^2+C_0\cW_2(\mu,\delta_{0})^2.
\end{align*}
In the same way we get
\begin{align*}
\dif \mE|Y^{\mu,\eta}_t|^2&=
\mE\big[2\langle Y^{\mu,\eta}_t, b(\mu,Y^{\mu,\eta}_t,\cL_{Y^{\mu,\eta}_t}) \rangle\\
&\quad+\|\sigma_1(\mu,Y^{\mu,\eta}_t,\cL_{Y^{\mu,\eta}_t})\|^2+\|\sigma_2(\mu,Y^{\mu,\eta}_t,\cL_{Y^{\mu,\eta}_t})\|^2\big]\dif t\\
&\leq -(c_2-c_1)\mE|Y^{\mu,\eta}_t|^2\dif t+C_0\cW_2(\mu,\delta_0)^2\dif t,
\end{align*}
which in turn yields that
\begin{align*}
\mE|Y^{\mu,\eta}_t|^{2}\leq C_0\,e^{-(c_2-c_1)t}\mE|\eta|^2+C_0\,\cW_2(\mu,\delta_0)^2.
\end{align*}
Thus the desired results are proved.
\end{proof}

\bl
Assume that {\bf (H$^{\sigma,b}$)} holds. Then there exists a positive constant $C_0$ such that for any $\mu\in\sP_2(\mR^{d_1})$,
\begin{align}
&\cW_2(\zeta^\mu,\delta_0)^2\leq C_0\,\cW_2(\mu,\delta_0)^2.\label{zeta1}
\end{align}
\el

\begin{proof}
Using (\ref{exp}) and Lemma \ref{Y2}, we have
\begin{align*}
\cW_2(\zeta^\mu,\delta_0)^2&\leq2\cW_2(\zeta^\mu,P_t^{\mu,*}\delta_0)^2+2\cW_2(P_t^{\mu,*}\delta_0,\delta_0)^2\\
&\leq C_0\,e^{-2\lambda_0t}\cW_2(\zeta^\mu,\delta_0)^2+C_0\,\mE|Y^{\mu,0}_t|^2\\
&\leq C_0\,e^{-2\lambda_0t}\cW_2(\zeta^\mu,\delta_0)^2+C_0\,\cW_2(\mu,\delta_0)^2,
\end{align*}
where $C_0>0$ is a constant independent of $t$. Taking the limit $t\to\infty$, the desired conclusion follows.
\end{proof}

\section{Strong convergence in the averaging principle}

Using the technique of Poisson equation, we shall first establish a strong fluctuation estimate in Subsection 4.1. Then we prove the strong convergence of the slow-fast system (\ref{sde}) to the averaged system (\ref{ave}) in Subsection 4.2. The optimal rate of convergence follows as a byproduct.

\subsection{Strong Fluctuation estimate}

Given a function $f(x,\mu,y,\nu)$ on $\mR^{d_1}\times\sP_2(\mR^{d_1})\times\mR^{d_2}\times\sP_2(\mR^{d_2})$,
the following result gives an estimate for the fluctuations of the process $f(X_s^\eps,\cL_{X_s^\eps},Y_s^\eps,\cL_{Y_s^\eps})$ over the time interval $[0,t]$.

\bl\label{flu}
Let {\bf (H$^{\sigma,b}$)} hold. Assume that $F,G$ and $c$ are Lipschitz continuous and $\sigma_1,\sigma_2,b\in C_b^{(1,1),2,(1,1)}(\sP_2(\mR^{d_1})\times\mR^{d_2}\times\sP_2(\mR^{d_2}))$. Then for every $f\in C_b^{2,(1,1),2,(1,1)}(\mR^{d_1}\times\sP_2(\mR^{d_1})\times\mR^{d_2}\times\sP_2(\mR^{d_2}))$ satisfying (\ref{cenf}) and $\p_xf(x,\mu,\cdot,\nu)
\in C_b^1(\mR^{d_2})$, we have
\begin{align*}
\mE\left|\int_0^tf(X_s^\eps,\cL_{X_s^\eps},Y_s^\eps,\cL_{Y_s^\eps})\dif s\right|^2\leq C_0\,\eps^2,
\end{align*}
where $C_0>0$ is a constant independent of $\eps$.
\el
\begin{proof}
Since $f$ satisfies (\ref{cenf}), by the assumptions on the coefficients and Theorem \ref{pfs}, there exists a unique
solution $\psi\in C_b^{2,(1,1),2,(1,1)}(\mR^{d_1}\times\sP_2(\mR^{d_1})\times\mR^{d_2}\times\sP_2(\mR^{d_2}))$ to the following Poisson equation:
\begin{align}\label{pss}
\sL_0(\mu,y,\nu)\psi(x,\mu,y,\nu)=-f(x,\mu,y,\nu),
\end{align}
where $(x,\mu)\in\mR^{d_1}\times\sP_2(\mR^{d_1})$ are regarded as parameters, and the operator $\sL_0$ is defined by (\ref{op}). Moreover, we have $\p_x\psi(x,\mu,\cdot,\nu)
\in C_b^1(\mR^{d_2})$.
Using It\^o's formula (see Lemma \ref{ito} below or \cite[Proposition 2.1]{CF}), we deduce that
\begin{align}
&\psi(X_t^\eps,\cL_{X_t^\eps},Y_t^\eps,\cL_{Y_t^\eps})\no\\
&=\psi(\xi,\cL_\xi,\eta,\cL_\eta) +\int_0^t\Big[\sL_1(X_s^\eps,\cL_{X_s^\eps},Y_s^\eps,\cL_{Y_s^\eps})\psi(X_s^\eps,\cL_{X_s^\eps},Y_s^\eps,\cL_{Y_s^\eps})\no\\
&\quad+\frac{1}{\eps}\sL_2(X_s^\eps,\cL_{X_s^\eps},Y_s^\eps,\cL_{Y_s^\eps})\psi(X_s^\eps,\cL_{X_s^\eps},Y_s^\eps,\cL_{Y_s^\eps})\no\\ &\quad+\frac{1}{\eps^2}\sL_0(\cL_{X_s^\eps},Y_s^\eps,\cL_{Y_s^\eps}) \psi(X_s^\eps,\cL_{X_s^\eps},Y_s^\eps,\cL_{Y_s^\eps})\Big]\dif s+M_t^1+\frac{1}{\eps}M_t^2+\frac{1}{\eps}M_t^3\no\\
&\quad+\frac{1}{\eps}\int_0^t\T\big((G\sigma_1^*)(X_s^\eps,\cL_{X_s^\eps},Y_s^\eps,\cL_{Y_s^\eps})
\cdot\p_x\p_y\psi(X_s^\eps,\cL_{X_s^\eps},Y_s^\eps,\cL_{Y_s^\eps})\big)\dif s\no\\
&\quad+\tilde\mE\bigg(\int_0^tF(\tilde X^{\eps}_s,\cL_{X_s^\eps},\tilde Y^{\eps}_s,\cL_{Y^{\eps}_s})\cdot\p_\mu\psi(X_s^\eps,\cL_{X_s^\eps},Y_s^\eps,\cL_{Y_s^\eps})(\tilde X^{\eps}_s)\no\\
&\qquad\quad+\frac{1}{2}\T\Big(GG^*(\tilde X^{\eps}_s,\cL_{X_s^\eps},\cL_{Y^{\eps}_s})\cdot\p_{\tilde x}\big[\p_\mu\psi(X_s^\eps,\cL_{X_s^\eps},Y_s^\eps,\cL_{Y_s^\eps})(\tilde X^{\eps}_s)\big]\Big)\no\\
&\qquad\quad+\frac{1}{\eps}c(\tilde X^{\eps}_s,\cL_{X_s^\eps},\tilde Y^{\eps}_s,\cL_{Y^{\eps}_s})\cdot\p_\nu\psi(X_s^\eps,\cL_{X_s^\eps},Y_s^\eps,\cL_{Y_s^\eps})(\tilde Y^{\eps}_s)\dif s\bigg),\label{ff1}
\end{align}
where the operators $\sL_1$ and $\sL_2$ are defined by (\ref{op1}) and (\ref{op2}), respectively, the process ($\tilde X^{\eps}_s,\tilde Y^{\eps}_s$) is a copy of the original process $(X^{\eps}_s,Y^{\eps}_s)$ defined on a copy $(\tilde\Omega,\tilde\sF,\tilde\mP)$ of the original probability space $(\Omega,\sF,\mP)$, and for $i=1,2,3$, $M_t^i$ are martingales defined by
\begin{align*}
M_t^1&:=\int_0^t\p_x\psi(X_s^\eps,\cL_{X_s^\eps},Y_s^\eps,\cL_{Y_s^\eps})\cdot G(X_s^\eps,\cL_{X_s^\eps},\cL_{Y_s^\eps})dW^1_s,\\
M_t^2&:=\int_0^t\p_y\psi(X_s^\eps,\cL_{X_s^\eps},Y_s^\eps,\cL_{Y_s^\eps})\cdot\sigma_1(\cL_{X_s^\eps},Y_s^\eps,\cL_{Y_s^\eps})dW^1_s,\\
M_t^3&:=\int_0^t\p_y\psi(X_s^\eps,\cL_{X_s^\eps},Y_s^\eps,\cL_{Y_s^\eps})\cdot\sigma_2(\cL_{X_s^\eps},Y_s^\eps,\cL_{Y_s^\eps})dW^2_s.
\end{align*}
Multiplying $\eps^2$ from both sides of (\ref{ff1}), taking expectation and in view of  the equation (\ref{pss}), we obtain
%\begin{align*}
%&\int_0^tf(X_s^\eps,\cL_{X_s^\eps},Y_s^\eps,\cL_{Y_s^\eps})\dif s\\
%&=\eps^2 \big[\psi(\xi,\cL_\xi,\eta,\cL_\eta)-\psi(X_t^\eps,\cL_{X_t^\eps},Y_t^\eps,\cL_{Y_t^\eps})\big]+\eps^2 M_t^1+\eps M_t^2+\eps M_t^3\\
%&\quad+\eps^2\int_0^t
%\sL_1(X_s^\eps,\cL_{X_s^\eps},Y_s^\eps,\cL_{Y_s^\eps})\psi(X_s^\eps,\cL_{X_s^\eps},Y_s^\eps,\cL_{Y_s^\eps})\dif s\\
%&\quad+\eps\int_0^t\sL_2(X_s^\eps,\cL_{X_s^\eps},Y_s^\eps,\cL_{Y_s^\eps})  \psi(X_s^\eps,\cL_{X_s^\eps},Y_s^\eps,\cL_{Y_s^\eps})\dif s\\
%&\quad+\eps\int_0^t\T\big((G\sigma_1^*)(X_s^\eps,\cL_{X_s^\eps},Y_s^\eps,\cL_{Y_s^\eps})
%\cdot\p_x\p_y\psi(X_s^\eps,\cL_{X_s^\eps},Y_s^\eps,\cL_{Y_s^\eps})\big)\dif s\\
%&\quad+\eps^2\,\tilde\mE\bigg(\int_0^tF(\tilde X^{\eps}_s,\cL_{X_s^\eps},\tilde Y^{\eps}_s,\cL_{Y^{\eps}_s})\cdot\p_\mu\psi(X_s^\eps,\cL_{X_s^\eps},Y_s^\eps,\cL_{Y_s^\eps})(\tilde X^{\eps}_s)\no\\
%&\qquad\quad+\frac{1}{2}\T\Big(GG^*(\tilde X^{\eps}_s,\cL_{X_s^\eps},\cL_{Y^{\eps}_s})\cdot\p_{\tilde x}\big[\p_\mu\psi(X_s^\eps,\cL_{X_s^\eps},Y_s^\eps,\cL_{Y_s^\eps})(\tilde X^{\eps}_s)\big]\Big)\no\\
%&\qquad\quad+\frac{1}{\eps}c(\tilde X^{\eps}_s,\cL_{X_s^\eps},\tilde Y^{\eps}_s,\cL_{Y^{\eps}_s})\cdot\p_\nu\psi(X_s^\eps,\cL_{X_s^\eps},Y_s^\eps,\cL_{Y_s^\eps})(\tilde Y^{\eps}_s)\dif s\bigg).
%\end{align*}
\begin{align*}
&\mE\left|\int_0^tf(X_s^\eps,\cL_{X_s^\eps},Y_s^\eps,\cL_{Y_s^\eps})\dif s\right|^2\\
&\leq C_1\Big[\eps^4\,\mE|\psi(\xi,\cL_\xi,\eta,\cL_\eta)|^2+\eps^4\,\mE|\psi(X_t^\eps,\cL_{X_t^\eps},Y_t^\eps,\cL_{Y_t^\eps})|^2\\
&\qquad\quad+\eps^4\,\mE|M_t^1|^2+\eps^2\,\mE|M_t^2|^2+\eps^2\,\mE|M_t^3|^2\Big]\\
&\quad+C_1\,\eps^4\,\mE\bigg|\int_0^t
\sL_1(X_s^\eps,\cL_{X_s^\eps},Y_s^\eps,\cL_{Y_s^\eps})\psi(X_s^\eps,\cL_{X_s^\eps},Y_s^\eps,\cL_{Y_s^\eps})\dif s\bigg|^2\\
&\quad+C_1\,\eps^2\,\mE\bigg|\int_0^t\sL_2(X_s^\eps,\cL_{X_s^\eps},Y_s^\eps,\cL_{Y_s^\eps})  \psi(X_s^\eps,\cL_{X_s^\eps},Y_s^\eps,\cL_{Y_s^\eps})\dif s\bigg|^2\\
&\quad+C_1\,\eps^2\,\mE\bigg|\int_0^t\T\big((G\sigma_1^*)(X_s^\eps,\cL_{X_s^\eps},Y_s^\eps,\cL_{Y_s^\eps})
\cdot\p_x\p_y\psi(X_s^\eps,\cL_{X_s^\eps},Y_s^\eps,\cL_{Y_s^\eps})\big)\dif s\bigg|^2\\
&\quad+C_1\,\eps^4\,\mE\tilde\mE\bigg|\int_0^tF(\tilde X^{\eps}_s,\cL_{X_s^\eps},\tilde Y^{\eps}_s,\cL_{Y^{\eps}_s})\cdot\p_\mu\psi(X_s^\eps,\cL_{X_s^\eps},Y_s^\eps,\cL_{Y_s^\eps})(\tilde X^{\eps}_s)\no\\
&\qquad\quad+\frac{1}{2}\T\Big(GG^*(\tilde X^{\eps}_s,\cL_{X_s^\eps},\cL_{Y^{\eps}_s})\cdot\p_{\tilde x}\big[\p_\mu\psi(X_s^\eps,\cL_{X_s^\eps},Y_s^\eps,\cL_{Y_s^\eps})(\tilde X^{\eps}_s)\big]\Big)\dif s\bigg|^2\no\\
&\quad+C_1\,\eps^2\,\mE\tilde\mE\bigg|\int_0^tc(\tilde X^{\eps}_s,\cL_{X_s^\eps},\tilde Y^{\eps}_s,\cL_{Y^{\eps}_s})\cdot\p_\nu\psi(X_s^\eps,\cL_{X_s^\eps},Y_s^\eps,\cL_{Y_s^\eps})(\tilde Y^{\eps}_s)\dif s\bigg|^2\\
&=:\sum_{i=1}^6\cU_i(\eps).
\end{align*}
By  Lemma \ref{Y1}, we derive that
\begin{align*}
&\mE|\psi(\xi,\cL_\xi,\eta,\cL_\eta)|^2+\mE|\psi(X_t^\eps,\cL_{X_t^\eps},Y_t^\eps,\cL_{Y_t^\eps})|^2\\
&\leq C_1\big(1+\mE|\xi|^2+\mE|\eta|^2+\mE|X_t^\eps|^2+\mE|Y_t^\eps|^2\big)<\infty.
\end{align*}
At the same time, using the Burkholder-Davis-Gundy inequality, we get
\begin{align*}
\mE|M_t^1|^2+\mE|M_t^2|^2+\mE|M_t^3|^2\leq C_1\int_0^t\big(1+\mE|X_s^\eps|^2+\mE|Y_s^\eps|^2\big)\dif s<\infty.
\end{align*}
Consequently, we have
\begin{align}\label{u1}
\cU_1(\eps)\leq C_1\,\eps^2.
\end{align}
Using the assumptions on the coefficients and the regularity of $\psi$ again, and by Lemma \ref{Y1}, we arrive at
\begin{align*}
\sum_{i=2}^6\cU_i(\eps)\leq C_2\,\eps^2\int_0^t\big(1+\mE|X_s^\eps|^4+\mE|Y_s^\eps|^4\big)\dif s\leq C_2\,\eps^2,
\end{align*}
which together with (\ref{u1}) implies the desired result. The proof is thus finished.
\end{proof}

\subsection{Proof of Theorem \ref{main1}}

Throughout this subsection, we assume that the conditions in Theorem \ref{main1} hold. Recall that $X_t^\eps$ and $\bar X_t$ satisfy the McKean-Vlasov equation (\ref{sde}) and (\ref{ave}), respectively. In order to prove the strong convergence of $X_t^\eps$ to $\bar X_t$, we first give the following lemma, which indicates that the averaged function $\bar F$ and $\bar G$ are Lipschitz continuous.

\bl\label{Y3}
Let $\bar F$ and $\bar G$ are defined by (\ref{bF}) and (\ref{sigmaG}), respectively. Then for any $x_1,x_2\in\mR^{d_1}$ and $\mu_1,\mu_2\in\sP_2(\mR^{d_1})$, we have
\begin{align}
&|\bar F(x_1,\mu_1)-\bar F(x_2,\mu_2)|\leq C_0\,\big(|x_1-x_2|+\cW_2(\mu_1,\mu_2)\big),\label{bf2}\\
&\|\bar G(x_1,\mu_1)-\bar G(x_2,\mu_2)\|\leq C_0\,\big(|x_1-x_2|+\cW_2(\mu_1,\mu_2)\big),\label{sigmaG2}
\end{align}
where $C_0>0$ is a constant.
\el

\begin{proof}
By the definition of $\bar F$ and the Lipschitz continuity of $F$, we have
\begin{align}\label{FL}
|\bar F(x_1,\mu_1)-\bar F(x_2,\mu_2)|&=\left|\int_{\mR^{d_2}}F(x_1,\mu_1,y,\zeta^{\mu_1})\zeta^{\mu_1}(\dif y)-\int_{\mR^{d_2}}F(x_2,\mu_2,y,\zeta^{\mu_2})\zeta^{\mu_2}(\dif y)\right|\no\\
&\leq\left|\int_{\mR^{d_2}}\big(F(x_1,\mu_1,y,\zeta^{\mu_1})-F(x_2,\mu_2,y,\zeta^{\mu_2})\big)\zeta^{\mu_1}(\dif y)\right|\no\\
&\quad+\left|\int_{\mR^{d_2}}F(x_2,\mu_2,y,\zeta^{\mu_2})\zeta^{\mu_1}(\dif y)-\int_{\mR^{d_2}}F(x_2,\mu_2,y,\zeta^{\mu_2})\zeta^{\mu_2}(\dif y)\right|\no\\
&\leq C_0\,\big(|x_1-x_2|+\cW_2(\mu_1,\mu_2)+\cW_2(\zeta^{\mu_1},\zeta^{\mu_2})\big).
\end{align}
Using the definition of Wasserstein distance, (\ref{exp}) and Lemma \ref{Y2}, we get
\begin{align*}
\cW_2(\zeta^{\mu_1},\zeta^{\mu_2})^2
&\leq3\cW_2(\zeta^{\mu_1},\cL_{Y^{\mu_1,0}_t})^2+3\cW_2(\zeta^{\mu_2},\cL_{Y^{\mu_2,0}_t})^2
+3\cW_2(\cL_{Y^{\mu_1,0}_t},\cL_{Y^{\mu_2,0}_t})^2\\
&\leq C_0\,e^{-2\lambda_0t}\big(\cW_2(\zeta^{\mu_1},\delta_0)^2+\cW_2(\zeta^{\mu_2},\delta_0)^2\big)+C_0\,\mE|Y^{\mu_1,0}_t-Y^{\mu_2,0}_t|^2\\
&\leq C_0\,e^{-2\lambda_0t}\big(\cW_2(\zeta^{\mu_1},\delta_0)^2+\cW_2(\zeta^{\mu_2},\delta_0)^2\big)+C_0\,\cW_2(\mu_1,\mu_2)^2.
\end{align*}
Letting $t\rightarrow\infty$, we obtain
$$
\cW_2(\zeta^{\mu_1},\zeta^{\mu_2})^2\leq C_0\,\cW_2(\mu_1,\mu_2)^2.
$$
This together with (\ref{FL}) yields (\ref{bf2}). Similarly, we deduce that
\begin{align*}
\|\bar G(x_1,\mu_1)-\bar G(x_2,\mu_2)\|&=\|G(x_1,\mu_1,\zeta^{\mu_1})- G(x_2,\mu_2,\zeta^{\mu_2})\|\\
&\leq C_0\,\big(|x_1-x_2|+\cW_2(\mu_1,\mu_2)+\cW_2(\zeta^{\mu_1},\zeta^{\mu_2})\big),
\end{align*}
which in turn implies that (\ref{sigmaG2}) holds. Thus the proof is completed.
\end{proof}

Now, we are in the position to give:

\begin{proof}[Proof of Theorem \ref{main1}]
In view of (\ref{sde}) and (\ref{ave}), we have
\begin{align*}
X_t^\eps-\bar X_t&=\int_0^t\big[F(X_s^\eps,\cL_{X_s^\eps},Y_s^\eps,\cL_{Y_s^\eps})-\bar F(\bar X_s,\cL_{\bar X_s})\big]\dif s\\
&\quad+\int_0^t\big[G(X_s^\eps,\cL_{X_s^\eps},\cL_{Y_s^\eps})-\bar G(\bar X_s,\cL_{\bar X_s})\big]\dif W^1_s\\
&=\int_0^t\big[F(X_s^\eps,\cL_{X_s^\eps},Y_s^\eps,\cL_{Y_s^\eps})-\bar F(X_s^\eps,\cL_{X_s^\eps})\big]\dif s\\
&\quad+\int_0^t\big[\bar F(X_s^\eps,\cL_{X_s^\eps})-\bar F(\bar X_s,\cL_{\bar X_s})\big]\dif s\\
&\quad+\int_0^t\big[\bar G(X_s^\eps,\cL_{X_s^\eps})-\bar G(\bar X_s,\cL_{\bar X_s})\big]\dif W^1_s\\
&\quad+\int_0^t\big[G(X_s^\eps,\cL_{X_s^\eps},\cL_{Y_s^\eps})-\bar G(X_s^\eps,\cL_{X_s^\eps})\big]\dif W^1_s.
\end{align*}
Taking expectation from both sides of the above equality, we get that there exists a constant $C_0>0$ such that for every $t\in[0,T]$,
\begin{align}\label{X1}
\mE|X_t^\eps-\bar X_t|^2&\leq C_0\mE\left|\int_0^t\big[F(X_s^\eps,\cL_{X_s^\eps},Y_s^\eps,\cL_{Y_s^\eps})-\bar F(X_s^\eps,\cL_{X_s^\eps})\big]\dif s\right|^2\no\\
&\quad+C_0\int_0^t\mE\big|\bar F(X_s^\eps,\cL_{X_s^\eps})-\bar F(\bar X_s,\cL_{\bar X_s})\big|^2\dif s\no\\
&\quad+C_0\int_0^t\mE\big|\bar G(X_s^\eps,\cL_{X_s^\eps})-\bar G(\bar X_s,\cL_{\bar X_s})\big|^2\dif s\no\\
&\quad+ C_0\int_0^t\mE\big|G(X_s^\eps,\cL_{X_s^\eps},\cL_{Y_s^\eps})-\bar G(X_s^\eps,\cL_{X_s^\eps})\big|^2\dif s\no\\
&=:\sJ_1(\eps)+\sJ_2(\eps)+\sJ_3(\eps)+\sJ_4(\eps).
\end{align}
In what follows, we estimate the above four terms one by one. To control the first term, note that by the definition of $\bar F(x,\mu)$, we have
$$
\int_{\mR^{d_2}}\big[F(x,\mu,y,\zeta^\mu)-\bar F(x,\mu)\big]\zeta^\mu(\dif y)=0.
$$
Moreover, by the assumptions that $\sigma_1,\sigma_2,b\in C_b^{(1,1),2,(1,1)}$, $F\in C_b^{2,(1,1),2,(1,1)}$ and Lemma \ref{reh}, we have
$$
F(x,\mu,y,\nu)-\bar F(x,\mu)\in C_b^{2,(1,1),2,(1,1)}(\mR^{d_1}\times\sP_2(\mR^{d_1})\times\mR^{d_2}\times\sP_2(\mR^{d_2})).
$$
This together with  the assumption $\p_xF(x,\mu,\cdot,\nu)\in C_b^1(\mR^{d_2})$ and Lemma \ref{flu} yields
\begin{align}\label{X2}
\sJ_1(\eps)\leq C_1\,\eps^2.
\end{align}
For the second and third terms, using Lemma \ref{Y3} we deduce that
\begin{align}\label{X3}
\sJ_2(\eps)+\sJ_3(\eps)&\leq C_2\int_0^t\big[\mE|X_s^\eps-\bar X_s|^2+\cW_2(\cL_{X_s^\eps},\cL_{\bar X_s})^2\big]\dif s\no\\
&\leq C_3\int_0^t\mE|X_s^\eps-\bar X_s|^2\dif s.
\end{align}
As for $\sJ_4(\eps)$, by the definition of $\bar G(x,\mu)$ and the assumption on $G$, we have
\begin{align}\label{X4}
\sJ_4(\eps)
&\leq C_4\int_0^t\cW_2(\cL_{Y_{s/\eps^2}^{\mu,\eta}}|_{\mu=\cL_{X^\eps_s}},\zeta^{\cL_{X_s^\eps}})^2\dif s+C_4\int_0^t\cW_2(\cL_{Y_s^\eps},\cL_{Y_{s/\eps^2}^{\mu,\eta}}|_{\mu=\cL_{X^\eps_s}})^2\dif s\no\\
&=:\sJ_{4,1}(\eps)+\sJ_{4,2}(\eps),
\end{align}
where $Y_t^{\mu,\eta}$ is the unique strong solution of the equation (\ref{sde1}).
Applying (\ref{exp}), (\ref{zeta1}) and Lemma \ref{Y1}, we get
\begin{align}\label{X6}
\sJ_{4,1}(\eps)&=C_4\int_0^t\cW_2(\cL_{Y_{s/\eps^2}^{\mu,\eta}}|_{\mu=\cL_{X^\eps_s}},\zeta^{\cL_{X_s^\eps}})^2\dif s\no\\
&\leq C_4\int_0^te^{-\frac{2\lambda_0s}{\eps^2}}\cdot\cW_2(\zeta^{\cL_{X^\eps_s}},\nu)^2\dif s\no\\
&\leq C_4\int_0^te^{-\frac{2\lambda_0s}{\eps^2}}\cdot\big(1+\cW_2(\cL_{X^\eps_s},\delta_0)^2\big)\dif s\leq C_4\,\eps^2.
\end{align}
To control $\sJ_{4,2}(\eps)$, for every $t\geq0$, we let $\hat Y^\eps_t=Y^\eps_{\eps^2t}$. Then it is easy to see that $\hat Y^\eps_t$ satisfies the following equation:
\begin{align*}
\hat Y^\eps_t&=\eta+\eps\int_0^tc(X^{\eps}_{\eps^2s},\cL_{X^\eps_{\eps^2s}},\hat{Y}^{\eps}_s,\cL_{\hat{Y}^{\eps}_s})\dif s+\int_0^tb(\cL_{X^\eps_{\eps^2s}},\hat{Y}^{\eps}_s,\cL_{\hat{Y}^{\eps}_s})\dif s\\
&\quad+\int_0^t\sigma_1(\cL_{X^\eps_{\eps^2s}},\hat Y^{\eps}_s,\cL_{\hat Y^{\eps}_s})\dif \hat W_s^1+\int_0^t\sigma_2(\cL_{X^\eps_{\eps^2s}},\hat Y^{\eps}_s,\cL_{\hat Y^{\eps}_s})\dif \hat W_s^2.
\end{align*}
Note that
\begin{align}\label{42}
\cW_2(\cL_{Y_t^\eps},\cL_{Y_{t/\eps^2}^{\mu,\eta}}|_{\mu=\cL_{X^\eps_t}})^2
=\cW_2(\cL_{\hat Y^\eps_{t/\eps^2}},\cL_{Y_{t/\eps^2}^{\mu,\eta}}|_{\mu=\cL_{X^\eps_t}})^2
\leq\Big[\mE\big|\hat Y^\eps_{t/\eps^2}-Y_{t/\eps^2}^{\mu,\eta}\big|^2\Big]_{\mu=\cL_{X^\eps_t}}.
\end{align}
By It\^{o}'s formula and the assumption {\bf (H$^{\sigma,b}$)}, we deduce that
 for every $t>0$,
\begin{align*}
\dif \mE|\hat Y^\eps_t-Y_t^{\mu,\eta}|^2&=\mE\big[2\langle\hat Y^\eps_t-Y_t^{\mu,\eta},b(\cL_{X^\eps_{\eps^2t}},\hat{Y}^{\eps}_t,\cL_{\hat{Y}^{\eps}_t})
-b(\mu,Y_t^{\mu,\eta},\cL_{Y_t^{\mu,\eta}})\rangle\\
&\quad+\|\sigma_1(\cL_{X^\eps_{\eps^2t}},\hat{Y}^{\eps}_t,\cL_{\hat{Y}^{\eps}_t})
-\sigma_1(\mu,Y_t^{\mu,\eta},\cL_{Y_t^{\mu,\eta}})\|^2\\
&\quad+\|\sigma_2(\cL_{X^\eps_{\eps^2t}},\hat{Y}^{\eps}_t,\cL_{\hat{Y}^{\eps}_t})
-\sigma_2(\mu,Y_t^{\mu,\eta},\cL_{Y_t^{\mu,\eta}})\|^2\big]\dif t\\
&\quad+\mE\big[2\langle\hat Y^\eps_t-Y_t^{\mu,\eta},\eps c(X^\eps_{\eps^2t},\cL_{X^\eps_{\eps^2t}},\hat{Y}^{\eps}_t,\cL_{\hat{Y}^{\eps}_t})\rangle\big]\dif t\\
&\leq-(c_2-c_1)\,\mE|\hat Y^\eps_t-Y_t^{\mu,\eta}|^2\dif t+C_4\,\cW_2(\cL_{X^\eps_{\eps^2t}},\mu)^2\dif t\\
&\quad+C_4\,\eps^2\big(1+\mE|X_{\eps^2t}^\eps|^2+\mE|Y_{\eps^2t}^\eps|^2\big)\dif t,
\end{align*}
which together with the comparison theorem implies
\begin{align*}
\mE|\hat Y^\eps_t-Y_t^{\mu,\eta}|^2&\leq C_4\int_0^te^{-(c_2-c_1)(t-s)}\cdot\cW_2(\cL_{X^\eps_{\eps^2s}},\mu)^2\dif s\\
&\quad+C_4\,\eps^2\int_0^te^{-(c_2-c_1)(t-s)}\cdot\big(1+\mE|X_{\eps^2s}^\eps|^2+\mE|Y_{\eps^2s}^\eps|^2\big)\dif s.
\end{align*}
As a result, we have
\begin{align*}
&\Big[\mE\big|\hat Y^\eps_{t/\eps^2}-Y_{t/\eps^2}^{\mu,\eta}\big|^2\Big]_{\mu=\cL_{X^\eps_t}}\\
&\leq C_4\int_0^{\frac{t}{\eps^2}}e^{-(c_2-c_1)\big(\frac{t}{\eps^2}-s\big)}\cdot\cW_2(\cL_{X^\eps_{\eps^2s}},\cL_{X^\eps_t})^2\dif s\\
&\quad+C_4\,\eps^2\int_0^{\frac{t}{\eps^2}}e^{-(c_2-c_1)\big(\frac{t}{\eps^2}-s\big)}\cdot\big(1+\mE|X_{\eps^2s}^\eps|^2+\mE|Y_{\eps^2s}^\eps|^2\big)\dif s\\
&\leq C_4\,\frac{1}{\eps^2}\int_0^te^{-(c_2-c_1)\frac{t-s}{\eps^2}}\cdot\mE|X^\eps_s-X^\eps_t|^2\dif s\\
&\quad+C_4\int_0^te^{-(c_2-c_1)\frac{t-s}{\eps^2}}\cdot\big(1+\mE|X_s^\eps|^2+\mE|Y_s^\eps|^2\big)\dif s\\
&\leq C_4\int_0^te^{-(c_2-c_1)\frac{t-s}{\eps^2}}\cdot \frac{t-s}{\eps^2}\dif s+C_4\,\eps^2\leq C_4\,\eps^2,
\end{align*}
where the constant $C_4$ is independent of $\eps$. Taking this back into (\ref{42}), we arrive at
\begin{align}\label{X5}
\sJ_{4,2}(\eps)\leq C_4\,\eps^2.
\end{align}
Substituting (\ref{X5}) and (\ref{X6}) into (\ref{X4}) gives
$$
\sJ_4(\eps)\leq C_4\,\eps^2.
$$
This together with (\ref{X1}), (\ref{X2}) and (\ref{X3}) yields
\begin{align*}
\mE|X_t^\eps-\bar X_t|^2&\leq C_5\,\eps^2+C_5\int_0^t\mE|X_s^\eps-\bar X_s|^2\dif s,
\end{align*}
which in turn implies the desired assertion by Gronwall's inequality.
\end{proof}

\section{Functional central limit type theorem}

In this section, we study the functional central limit type theorem for the system (\ref{sde}). Recall that assumption (\ref{G}) holds,
i.e.,the diffusion coefficient $G$ in the slow equation does not depend on the distribution of the fast motion. Note that in this case, we get
$$
\bar G(x,\mu)=G(x,\mu).
$$
We shall first derive some weak fluctuation estimates in Subsection 5.1. Then we give the proof of Theorem \ref{main2} in Subsection 5.2.

\subsection{Weak Fluctuation estimates}

Recall that $Z_t^\eps$ and $\bar Z_t$ are defined by (\ref{z}) and (\ref{lim}), respectively.
By definition, we can write
\begin{align*}
\dif Z_t^\eps
&=\frac{1}{\eps} \delta F(X_t^\eps,\cL_{X_t^\eps},Y_t^\eps,\cL_{Y_t^\eps})\dif t\\
&\quad+\Big(\frac{1}{\eps}\big[\bar F(X_t^\eps,\cL_{X_t^\eps})-\bar F(\bar X_t,\cL_{\bar X_t})\big]\dif t+\frac{1}{\eps}\big[G(X_t^\eps,\cL_{X_t^\eps})-G(\bar X_t,\cL_{\bar X_t})\big]\dif W_t^1\Big),
\end{align*}
where $\delta F$ is defined by (\ref{pss2}). For the sake of convenience, we define
\begin{align}\label{op3}
\sL_3:=\sL_3(x,\mu,y,\nu):=\sum_{i=1}^{d_1}\big[F_i(x,\mu,y,\nu)-\bar F_i(x,\mu)\big]\frac{\p}{\p z_i},
\end{align}
and
\begin{align}\label{op4}
\sL_4^\eps&:=\sL_4^\eps(x,\mu,\bar x,\bar\mu):=\frac{1}{\eps}\sum_{i=1}^{d_1}\big[\bar F_i(x,\mu)-\bar F_i(\bar x,\bar\mu)\big]\frac{\p}{\p z_i}\no\\
&\quad+\frac{1}{2\eps^2}\sum_{i,j=1}^{d_1}\Big(\big[G(x,\mu)-G(\bar x,\bar\mu)\big]\big[G(x,\mu)-G(\bar x,\bar\mu)\big]^*\Big)_{ij}\frac{\p^2}{\p z_i\p z_j}\no\\
&\quad+\frac{1}{\eps}\sum_{i,j=1}^{d_1}\Big(G(x,\mu)\big[G(x,\mu)-G(\bar x,\bar\mu)\big]^*\Big)_{ij}\frac{\p^2}{\p x_i\p z_j}.
\end{align}
Let $f(t,x,\mu,y,\nu,z,\pi)$ be a function satisfying the centering condition, i.e., for every fixed $(t,x,\mu,z,\pi)\in\mR_+\times\mR^{d_1}\times\sP_2(\mR^{d_1})\times\mR^{d_1}\times\sP_2(\mR^{d_1})$,
\begin{align}\label{cen1}
\int_{\mR^{d_2}}f(t,x,\mu,y,\zeta^\mu,z,\pi)\zeta^\mu(\dif y)=0.
\end{align}
Then, we consider the following Poisson equation:
\begin{align}\label{pss1}
\sL_0(\mu,y,\nu)\psi(t,x,\mu,y,\nu,z,\pi)=-f(t,x,\mu,y,\nu,z,\pi),
\end{align}
where $(t,x,\mu,z,\pi)$ are regarded as parameters. We have the following  fluctuation estimate for the process $f(t,X_t^\eps,\cL_{X_t^\eps},Y_t^\eps,\cL_{Y_t^\eps},Z_t^\eps,\cL_{Z_t^\eps})$.

\bl
Let {\bf (H$^{\sigma,b}$)} hold. Assume that $F,G$ and $c$ are Lipschitz continuous and $\sigma_1,\sigma_2,b\in C_b^{(1,1),2,(1,1)}(\sP_2(\mR^{d_1})\times\mR^{d_2}\times\sP_2(\mR^{d_2}))$. Then for $f\in C_b^{1,2,(1,1),2,(1,1),2,(1,1)}(\mR_+\times\mR^{d_1}\times\sP_2(\mR^{d_1})\times\mR^{d_2}\times\sP_2(\mR^{d_2})\times\mR^{d_1}\times\sP_2(\mR^{d_1}))$ satisfying $\p_xf,\p_zf(t,x,\mu,\cdot,\nu,z,\pi)
\in C_b^1(\mR^{d_2})$ and  (\ref{cen1}), we have
\begin{align}\label{es1}
&\mE\left(\int_0^tf(s,X_s^\eps,\cL_{X_s^\eps},Y_s^\eps,\cL_{Y_s^\eps},Z_s^\eps,\cL_{Z_s^\eps})\dif s\right)\no\\
&\leq C_0\,\eps^2+C_0\,\eps\bigg[\mE\bigg(\int_0^t
\sL_2\psi(s,X_s^\eps,\cL_{X_s^\eps},Y_s^\eps,\cL_{Y_s^\eps},Z_s^\eps,\cL_{Z_s^\eps})\dif s\bigg)\no\\
&\quad+\mE\bigg(\int_0^t\sL_3\psi(s,X_s^\eps,\cL_{X_s^\eps},Y_s^\eps,\cL_{Y_s^\eps},Z_s^\eps,\cL_{Z_s^\eps})
\dif s\bigg)\no\\
&\quad+\mE\bigg(\int_0^t\T\big((G\sigma_1^*)(X_s^\eps,\cL_{X_s^\eps},Y_s^\eps,\cL_{Y_s^\eps})
\cdot\p_x\p_y\psi(s,X_s^\eps,\cL_{X_s^\eps},Y_s^\eps,\cL_{Y_s^\eps},Z_s^\eps,\cL_{Z_s^\eps})\big)\dif s\bigg)\no\\
&\quad+\mE\bigg(\int_0^t\T\bigg(\sigma_1(\cL_{X_s^\eps},Y_s^\eps,\cL_{Y_s^\eps})\frac{[G(X_s^\eps,\cL_{X_s^\eps})- G(\bar X_s,\cL_{\bar X_s})]^*}{\eps}\no\\
&\qquad\qquad\times\p_y\p_z\psi(s,X_s^\eps,\cL_{X_s^\eps},Y_s^\eps,\cL_{Y_s^\eps},Z_s^\eps,\cL_{Z_s^\eps})\bigg)\dif s\bigg)\no\\
&\quad+\mE\tilde\mE\bigg(\int_0^tc(\tilde X^{\eps}_s,\cL_{X_s^\eps},\tilde Y^{\eps}_s,\cL_{Y^{\eps}_s})\cdot\p_\nu\psi(s,X_s^\eps,\cL_{X_s^\eps},Y_s^\eps,\cL_{Y_s^\eps},Z_s^\eps,\cL_{Z_s^\eps})(\tilde Y^{\eps}_s)\dif s\bigg)\no\\
&\quad+\mE\tilde\mE\bigg(\int_0^t\delta F(\tilde X^{\eps}_s,\cL_{X_s^\eps},\tilde Y^{\eps}_s,\cL_{Y^{\eps}_s})\cdot\p_\pi\psi(s,X_s^\eps,\cL_{X_s^\eps},Y_s^\eps,\cL_{Y_s^\eps},Z_s^\eps,\cL_{Z_s^\eps})(\tilde Z^{\eps}_s)\dif s\bigg)\bigg],
\end{align}
where $\sL_2$ and $\sL_3$ are given by (\ref{op2}) and (\ref{op3}), respectively, and $C_0>0$ is a constant independent of $\eps$.
\el

\br
Note that under the above assumptions and according to Theorem \ref{pfs}, we have $\psi\in C_b^{1,2,(1,1),2,(1,1),2,(1,1)}$ and $\p_x\psi,\p_z\psi(t,x,\mu,\cdot,\nu,z,\pi)
\in C_b^1(\mR^{d_2})$. Then we can compute that
\begin{align*}
&\mE\bigg(\int_0^t\T\bigg(\sigma_1(\cL_{X_s^\eps},Y_s^\eps,\cL_{Y_s^\eps})\frac{[G(X_s^\eps,\cL_{X_s^\eps})- G(\bar X_s,\cL_{\bar X_s})]^*}{\eps}\\
&\qquad\qquad\times\p_y\p_z\psi(s,X_s^\eps,\cL_{X_s^\eps},Y_s^\eps,\cL_{Y_s^\eps},Z_s^\eps,\cL_{Z_s^\eps})\bigg)\dif s\bigg)\\
&\leq C_0\int_0^t\mE\Big[\big(1+|Y_s^\eps|+\cW_2(\cL_{X_s^\eps},\delta_0)+\cW_2(\cL_{Y_s^\eps},\delta_0)\big)
\big(|Z_s^\eps|+(\mE|Z_s^\eps|^2)^{1/2}\big)\Big]\dif s\\
&\leq C_0\int_0^t\big(1+\mE|X_s^\eps|^2+\mE|Y_s^\eps|^2+\mE|Z_s^\eps|^2\big)\dif s<\infty.
\end{align*}
Meanwhile, the other terms on the right hand side of (\ref{es1}) can be controlled similarly. Therefore, we have
\begin{align}\label{es2}
\mE\left(\int_0^tf(s,X_s^\eps,\cL_{X_s^\eps},Y_s^\eps,\cL_{Y_s^\eps},Z_s^\eps,\cL_{Z_s^\eps})\dif s\right)\leq C_0\,\eps.
\end{align}
However, the homogenization effects of the terms involving expectations in (\ref{es1}) will appear when we investigate the central limit theorem, so we keep them for later use.
\er
\begin{proof}
Let $\psi$ be the solution of the Poisson equation (\ref{pss1}). Then by It\^o's formula, we deduce that
\begin{align}
&\psi(t,X_t^\eps,\cL_{X_t^\eps},Y_t^\eps,\cL_{Y_t^\eps},Z_t^\eps,\cL_{Z_t^\eps}) =\psi(0,\xi,\cL_{\xi},\eta,\cL_\eta,0,\delta_0)\no\\
&\quad+\int_0^t\Big(\p_s+\sL_1+\sL_4^\eps\Big)\psi(s,X_s^\eps,\cL_{X_s^\eps},Y_s^\eps,\cL_{Y_s^\eps},Z_s^\eps,\cL_{Z_s^\eps})\dif s\no\\
&\quad+\frac{1}{\eps}\int_0^t\T\big((G\sigma_1^*)(X_s^\eps,\cL_{X_s^\eps},Y_s^\eps,\cL_{Y_s^\eps})
\cdot\p_x\p_y\psi(s,X_s^\eps,\cL_{X_s^\eps},Y_s^\eps,\cL_{Y_s^\eps},Z_s^\eps,\cL_{Z_s^\eps})\big)\dif s\no\\
&\quad+M_t^1+\frac{1}{\eps}M_t^2+\frac{1}{\eps}M_t^3+\frac{1}{\eps}M_t^4\no\\
&\quad+\frac{1}{\eps}\int_0^t\Big(\sL_2+\sL_3\Big)
\psi(s,X_s^\eps,\cL_{X_s^\eps},Y_s^\eps,\cL_{Y_s^\eps},Z_s^\eps,\cL_{Z_s^\eps})\dif s\no\\
&\quad+\frac{1}{\eps^2}\int_0^t\sL_0(\cL_{X_s^\eps},Y_s^\eps,\cL_{Y_s^\eps}) \psi(s,X_s^\eps,\cL_{X_s^\eps},Y_s^\eps,\cL_{Y_s^\eps},Z_s^\eps,\cL_{Z_s^\eps})\dif s\no\\
&\quad+\frac{1}{\eps^2}\int_0^t\T\big(\sigma_1(\cL_{X_s^\eps},Y_s^\eps,\cL_{Y_s^\eps})[G(X_s^\eps,\cL_{X_s^\eps})- G(\bar X_s,\cL_{\bar X_s})]^*\no\\
&\qquad\qquad\times\p_y\p_z\psi(s,X_s^\eps,\cL_{X_s^\eps},Y_s^\eps,\cL_{Y_s^\eps},Z_s^\eps,\cL_{Z_s^\eps})\big)\dif s\no\\
&\quad+\tilde\mE\bigg(\int_0^tF(\tilde X^{\eps}_s,\cL_{X_s^\eps},\tilde Y^{\eps}_s,\cL_{Y^{\eps}_s})\cdot\p_\mu\psi(s,X_s^\eps,\cL_{X_s^\eps},Y_s^\eps,\cL_{Y_s^\eps},Z_s^\eps,\cL_{Z_s^\eps})(\tilde X^{\eps}_s)\no\\
&\qquad\quad+\frac{1}{2}\T\Big(GG^*(\tilde X^{\eps}_s,\cL_{X_s^\eps})\cdot\p_{\tilde x}\big[\p_\mu\psi(s,X_s^\eps,\cL_{X_s^\eps},Y_s^\eps,\cL_{Y_s^\eps},Z_s^\eps,\cL_{Z_s^\eps})(\tilde X^{\eps}_s)\big]\Big)\no\\
&\qquad\quad+\frac{1}{\eps}c(\tilde X^{\eps}_s,\cL_{X_s^\eps},\tilde Y^{\eps}_s,\cL_{Y^{\eps}_s})\cdot\p_\nu\psi(s,X_s^\eps,\cL_{X_s^\eps},Y_s^\eps,\cL_{Y_s^\eps},Z_s^\eps,\cL_{Z_s^\eps})(\tilde Y^{\eps}_s)\no\\
&\qquad\quad+\frac{1}{\eps}[F(\tilde X^{\eps}_s,\cL_{X_s^\eps},\tilde Y^{\eps}_s,\cL_{Y^{\eps}_s})-\bar F(\tilde X^{\eps}_s,\cL_{X_s^\eps})]\no\\
&\qquad\qquad\quad\times \p_\pi\psi(s,X_s^\eps,\cL_{X_s^\eps},Y_s^\eps,\cL_{Y_s^\eps},Z_s^\eps,\cL_{Z_s^\eps})(\tilde Z^{\eps}_s)\no\\
&\qquad\quad+\frac{1}{\eps}[\bar F(\tilde X^{\eps}_s,\cL_{X_s^\eps})-\bar F(\tilde{\bar X}_s,\cL_{\bar X_s})]\cdot\p_\pi\psi(s,X_s^\eps,\cL_{X_s^\eps},Y_s^\eps,\cL_{Y_s^\eps},Z_s^\eps,\cL_{Z_s^\eps})(\tilde Z^{\eps}_s)\no\\
&\qquad\quad+\frac{1}{2\eps^2}\T\Big([G(\tilde X^{\eps}_s,\cL_{X_s^\eps})-G(\tilde{\bar X}_s,\cL_{\bar X_s})][G(\tilde X^{\eps}_s,\cL_{X_s^\eps})-G(\tilde{\bar X}_s,\cL_{\bar X_s})]^*\no\\
&\qquad\qquad\times\p_{\tilde z}\p_\pi\psi(s,X_s^\eps,\cL_{X_s^\eps},Y_s^\eps,\cL_{Y_s^\eps},Z_s^\eps,\cL_{Z_s^\eps})(\tilde Z^{\eps}_s)\Big)\dif s\bigg),\label{f1}
\end{align}
where the operators $\sL_0$, $\sL_1$, $\sL_2$, $\sL_3$ and $\sL_4^\eps$ are given by (\ref{op}), (\ref{op1}), (\ref{op2}), (\ref{op3}) and (\ref{op4}), respectively, the process ($\tilde X^{\eps}_s,\tilde {\bar X}_s,\tilde Y^{\eps}_s,\tilde Z^{\eps}_s$) is a copy of the original process $(X^{\eps}_s,\bar X_s,Y^{\eps}_s,Z_s^{\eps})$ defined on a copy $(\tilde\Omega,\tilde\sF,\tilde\mP)$ of the original probability space $(\Omega,\sF,\mP)$, and for $i=1,2,3,4$, $M_t^i$ are martingales defined by
\begin{align*}
M_t^1&:=\int_0^t\p_x\psi(s,X_s^\eps,\cL_{X_s^\eps},Y_s^\eps,\cL_{Y_s^\eps},Z_s^\eps,\cL_{Z_s^\eps})\cdot G(X_s^\eps,\cL_{X_s^\eps})\dif W_s^1,\\
M_t^2&:=\int_0^t\p_y\psi(s,X_s^\eps,\cL_{X_s^\eps},Y_s^\eps,\cL_{Y_s^\eps},Z_s^\eps,\cL_{Z_s^\eps})\cdot \sigma_1(\cL_{X_s^\eps},Y_s^\eps,\cL_{Y_s^\eps})\dif W_s^1,\\
M_t^3&:=\int_0^t\p_y\psi(s,X_s^\eps,\cL_{X_s^\eps},Y_s^\eps,\cL_{Y_s^\eps},Z_s^\eps,\cL_{Z_s^\eps})\cdot \sigma_2(\cL_{X_s^\eps},Y_s^\eps,\cL_{Y_s^\eps})\dif W_s^2,\\
M_t^4&:=\int_0^t\p_z\psi(s,X_s^\eps,\cL_{X_s^\eps},Y_s^\eps,\cL_{Y_s^\eps},Z_s^\eps,\cL_{Z_s^\eps})\cdot
\big[G(X_s^\eps,\cL_{X_s^\eps})-G(\bar X_s,\cL_{\bar X_s})\big]\dif W_s^1.
\end{align*}
Multiplying $\eps^2$ and taking expectation on both sides of (\ref{f1}), and in view of (\ref{pss1}), we obtain
\begin{align*}
&\mE\left(\int_0^tf(s,X_s^\eps,\cL_{X_s^\eps},Y_s^\eps,\cL_{Y_s^\eps},Z_s^\eps,\cL_{Z_s^\eps})\dif s\right)\no\\
&=\eps^2\mE\big[\psi(0,\xi,\cL_{\xi},\eta,\cL_\eta,0,\delta_0)
-\psi(t,X_t^\eps,\cL_{X_t^\eps},Y_t^\eps,\cL_{Y_t^\eps},Z_t^\eps,\cL_{Z_t^\eps})\big]\\
&\quad+\eps^2\mE\bigg(\int_0^t\Big(\p_s+\sL_1+\sL_4^\eps\Big)\psi(s,X_s^\eps,\cL_{X_s^\eps},Y_s^\eps,\cL_{Y_s^\eps},Z_s^\eps,\cL_{Z_s^\eps})\dif s\bigg)\no\\
&\quad+\eps^2\mE\tilde\mE\bigg(\int_0^tF(\tilde X^{\eps}_s,\cL_{X_s^\eps},\tilde Y^{\eps}_s,\cL_{Y^{\eps}_s})\cdot\p_\mu\psi(s,X_s^\eps,\cL_{X_s^\eps},Y_s^\eps,\cL_{Y_s^\eps},Z_s^\eps,\cL_{Z_s^\eps})(\tilde X^{\eps}_s)\no\\
&\qquad\quad+\frac{1}{2}\T\Big(GG^*(\tilde X^{\eps}_s,\cL_{X_s^\eps})\cdot\p_{\tilde x}\big[\p_\mu\psi(s,X_s^\eps,\cL_{X_s^\eps},Y_s^\eps,\cL_{Y_s^\eps},Z_s^\eps,\cL_{Z_s^\eps})(\tilde X^{\eps}_s)\big]\bigg)\dif s\bigg)\no\\
&\quad+\eps^2\mE\tilde\mE\bigg(\int_0^t\frac{[\bar F(\tilde X^{\eps}_s,\cL_{X_s^\eps})-\bar F(\tilde{\bar X}_s,\cL_{\bar X_s})]}{\eps}\no\\
&\qquad\qquad\quad\times\p_\pi\psi(s,X_s^\eps,\cL_{X_s^\eps},Y_s^\eps,\cL_{Y_s^\eps},Z_s^\eps,\cL_{Z_s^\eps})(\tilde Z^{\eps}_s)\dif s\bigg)\no\\
&\quad+\eps^2\mE\tilde\mE\bigg(\int_0^t\frac{1}{2}\T\bigg(\frac{[G(\tilde X^{\eps}_s,\cL_{X_s^\eps})-G(\tilde{\bar X}_s,\cL_{\bar X_s})][G(\tilde X^{\eps}_s,\cL_{X_s^\eps})-G(\tilde{\bar X}_s,\cL_{\bar X_s})]^*}{\eps^2}\no\\
&\qquad\qquad\times\p_{\tilde z}\p_\pi\psi(s,X_s^\eps,\cL_{X_s^\eps},Y_s^\eps,\cL_{Y_s^\eps},Z_s^\eps,\cL_{Z_s^\eps})(\tilde Z^{\eps}_s)\bigg)\dif s\bigg)\no\\
&\quad+\eps\bigg[\mE\bigg(\int_0^t\sL_2(X_s^\eps,\cL_{X_s^\eps},Y_s^\eps,\cL_{Y_s^\eps})\psi(s,X_s^\eps,\cL_{X_s^\eps},Y_s^\eps,\cL_{Y_s^\eps},Z_s^\eps,\cL_{Z_s^\eps})\dif s\bigg)\no\\
&\quad+\mE\bigg(\int_0^t\sL_3(X_s^\eps,\cL_{X_s^\eps},Y_s^\eps,\cL_{Y_s^\eps})
\psi(s,X_s^\eps,\cL_{X_s^\eps},Y_s^\eps,\cL_{Y_s^\eps},Z_s^\eps,\cL_{Z_s^\eps})\dif s\bigg)\no\\
&\quad+\mE\bigg(\int_0^t\T\big((G\sigma_1^*)(X_s^\eps,\cL_{X_s^\eps},Y_s^\eps,\cL_{Y_s^\eps})
\cdot\p_x\p_y\psi(s,X_s^\eps,\cL_{X_s^\eps},Y_s^\eps,\cL_{Y_s^\eps},Z_s^\eps,\cL_{Z_s^\eps})\big)\dif s\bigg)\no\\
&\quad+\mE\bigg(\int_0^t\T\bigg(\sigma_1(\cL_{X_s^\eps},Y_s^\eps,\cL_{Y_s^\eps})\frac{[G(X_s^\eps,\cL_{X_s^\eps})- G(\bar X_s,\cL_{\bar X_s})]^*}{\eps}\no\\
&\qquad\qquad\times\p_y\p_z\psi(s,X_s^\eps,\cL_{X_s^\eps},Y_s^\eps,\cL_{Y_s^\eps},Z_s^\eps,\cL_{Z_s^\eps})\bigg)\dif s\bigg)\no\\
&\quad+\mE\tilde\mE\bigg(\int_0^tc(\tilde X^{\eps}_s,\cL_{X_s^\eps},\tilde Y^{\eps}_s,\cL_{Y^{\eps}_s})\cdot\p_\nu\psi(s,X_s^\eps,\cL_{X_s^\eps},Y_s^\eps,\cL_{Y_s^\eps},Z_s^\eps,\cL_{Z_s^\eps})(\tilde Y^{\eps}_s)\dif s\bigg)\no\\
&\quad+\mE\tilde\mE\bigg(\int_0^t\delta F(\tilde X^{\eps}_s,\cL_{X_s^\eps},\tilde Y^{\eps}_s,\cL_{Y^{\eps}_s})\cdot\p_\pi\psi(s,X_s^\eps,\cL_{X_s^\eps},Y_s^\eps,\cL_{Y_s^\eps},Z_s^\eps,\cL_{Z_s^\eps})(\tilde Z^{\eps}_s)\dif s\bigg)\bigg]\no\\
&=:\sum_{i=1}^{5}\cV_i(\eps)+\eps\,\cV_6(\eps).
\end{align*}
Let us first deal with the term involving $\sL_4^\eps$. Using the assumption on $F$ and by Lemma \ref{Y3}, we have that $\bar F$ satisfies the Lipschitz condition. This, together with the assumption on $G$ and the regularity of $\psi$, yields that for some $C_1>0$,
\begin{align}\label{terl4}
&\mE\bigg(\int_0^t\sL_4^\eps\psi(s,X_s^\eps,\cL_{X_s^\eps},Y_s^\eps,\cL_{Y_s^\eps},Z_s^\eps,\cL_{Z_s^\eps})\dif s\bigg)\no\\
&\leq C_1\mE\bigg(\int_0^t\bigg[\frac{|\bar F(X_s^\eps,\sL_{X_s^\eps})-\bar F(\bar X_s,\sL_{\bar X_s})|}{\eps}\no\\
&\qquad+\frac{\| G(X_s^\eps,\sL_{X_s^\eps})-G(\bar X_s,\sL_{\bar X_s})\|^2}{\eps^2}\no\\
&\qquad+\big(1+|X_s^\eps|+\cW_2(\sL_{X_s^\eps},\delta_0)\big)\frac{\| G(X_s^\eps,\sL_{X_s^\eps})-G(\bar X_s,\sL_{\bar X_s})\|}{\eps}\bigg]\dif s\bigg)\no\\
&\leq C_1\int_0^t\big(1+\mE|X_s^\eps|^2+\mE|Z_s^\eps|^2\big)\dif s<\infty.
\end{align}
Similarly, we have
\begin{align*}
\cV_4(\eps)+\cV_5(\eps)&\leq C_2\,\eps^2\,\mE\tilde\mE\bigg(\int_0^t\bigg[\frac{|\bar F(\tilde X_s^\eps,\sL_{X_s^\eps})-\bar F(\tilde{\bar X}_s,\sL_{\bar X_s})|}{\eps}\\
&\quad+\frac{\| G(\tilde X_s^\eps,\sL_{X_s^\eps})-G(\tilde{\bar X}_s,\sL_{\bar X_s})\|^2}{\eps^2}\bigg]\dif s\bigg)\no\\
&\leq C_2\,\eps^2\,\int_0^t\big(1+\mE|Z_s^\eps|^2\big)\dif s\leq C_2\,\eps^2.
\end{align*}
This together with (\ref{terl4}) and the assumptions on coefficients implies that
\begin{align*}
\sum_{i=1}^{5}\cV_i(\eps)\leq C_3\,\eps^2+C_3\,\eps^2\int_0^t\big(1+\mE|X_s^\eps|^2+\mE|Y_s^\eps|^2\big)\dif s\leq C_3\,\eps^2,
\end{align*}
which in turn yields the desired conclusion. Thus the proof is completed.
\end{proof}

\subsection{Proof of Theorem \ref{main2}}

Throughout this subsection, we assume that the conditions in Theorem \ref{main2} hold. Let $\bar X_t^{s,\xi}$ be the unique solution of the equation (\ref{ave}) starting from the initial data $\xi\in L^2(\Omega)$ at time $s$, and $\bar Z_t^{s,\xi,\vartheta}$ be the unique solution of the equation (\ref{lim}) with the initial value $\vartheta\in L^2(\Omega)$ at time $s$. Namely, for $t\geq s$,
\begin{align*}
\dif \bar X_t^{s,\xi}=\bar F(\bar X_t^{s,\xi},\cL_{\bar X_t^{s,\xi}})\dif t+G(\bar X_t^{s,\xi},\cL_{\bar X_t^{s,\xi}})\dif W_t^1,\qquad\bar X_s^{s,\xi}=\xi,
\end{align*}
and
\begin{align*}
\dif \bar Z_t^{s,\xi,\vartheta}&=\p_x\bar F(\bar X_t^{s,\xi},\cL_{\bar X_t^{s,\xi}})\bar Z_t^{s,\xi,\vartheta}\dif t+\tilde\mE\big[\p_\mu\bar F(\bar X_t^{s,\xi},\cL_{\bar X_t^{s,\xi}})(\tilde{\bar X}_t^{s,\tilde\xi})\tilde{\bar Z}_t^{s,\tilde\xi,\tilde\vartheta}\big]\dif t\\
&\quad+\overline{c\cdot\p_y\Phi}(\bar X_t^{s,\xi},\cL_{\bar X_t^{s,\xi}})\dif t+\tilde\mE\left[\overline{\overline{c\cdot\p_\nu\Phi}}(\bar X_t^{s,\xi},\cL_{\bar X_t^{s,\xi}})(\tilde{\bar X}_t^{s,\tilde\xi})\right]\dif t\\
&\quad+\overline{\sigma_1^*\cdot\p_x\p_y\Phi}(\bar X_t^{s,\xi},\cL_{\bar X_t^{s,\xi}})\cdot G(\bar X_t^{s,\xi},\cL_{\bar X_t^{s,\xi}})\dif t\\
&\quad+\p_xG(\bar X_t^{s,\xi},\cL_{\bar X_t^{s,\xi}})\bar Z_t^{s,\xi,\vartheta}\dif W_t^1\\
&\quad+\tilde\mE\big[\p_\mu G(\bar X_t^{s,\xi},\cL_{\bar X_t^{s,\xi}})(\tilde{\bar X}_t^{s,\tilde\xi})\tilde{\bar Z}_t^{s,\tilde\xi,\tilde\vartheta}\big]\dif W_t^1\\
&\quad+\overline{\p_y\Phi\cdot\sigma_1}(\bar X_t^{s,\xi},\cL_{\bar X_t^{s,\xi}})\dif W_t^1+\sqrt{\Sigma(\bar X_t^{s,\xi},\cL_{\bar X_t^{s,\xi}})}\dif \tilde W_t,\qquad\bar Z_s^{s,\xi,\vartheta}=\vartheta,
\end{align*}
where the positive semi-definite matrix $\Sigma(x,\mu)$ is given by (\ref{sigma}). For  fixed $T>0$ and  function $\varphi: \sP_2(\mR^{d_1})\to\mR$, we consider the following Cauchy problem on $[0,T]\times\sP_2(\mR^{d_1})\times\sP_2(\mR^{d_1})$ :
\begin{equation} \label{cp1}
\left\{ \begin{aligned}
&\p_t u(t,\cL_\xi,\cL_\vartheta)+\mE\big[\bar F(\xi,\cL_{\xi})\cdot\p_\mu u(t,\cL_\xi,\cL_\vartheta)(\xi)\big]\\
&\quad+\frac{1}{2}\mE\Big[\T\Big(GG^*(\xi,\cL_\xi)\cdot\p_{x}\big[\p_\mu u(t,\cL_\xi,\cL_\vartheta)(\xi)\big]\Big)\Big]\\
&\quad+\mE\bigg[\Big(\p_x\bar F(\xi,\cL_\xi)\vartheta+\tilde\mE\big[\p_\mu\bar F(\xi,\cL_\xi)(\tilde\xi)\tilde\vartheta\big]+\overline{c\cdot\p_y\Phi}(\xi,\cL_\xi)
+\tilde\mE\Big[\overline{\overline{c\cdot\p_\nu\Phi}}(\xi,\cL_{\xi})(\tilde\xi)\Big]\\
&\qquad\quad+\overline{\sigma_1^*\cdot\p_x\p_y\Phi}(\xi,\cL_\xi)\cdot G(\xi,\cL_\xi)\Big)\cdot\p_\pi u(t,\cL_\xi,\cL_\vartheta)(\vartheta)\bigg]\\
&\quad+\frac{1}{2}\mE\bigg[\T\Big(\big[\p_xG(\xi,\cL_\xi)\vartheta+\tilde\mE[\p_\mu G(\xi,\cL_\xi)(\tilde\xi)\tilde\vartheta]\big]\big[\p_xG(\xi,\cL_\xi)\vartheta+\tilde\mE[\p_\mu G(\xi,\cL_\xi)(\tilde\xi)\tilde\vartheta]\big]^*\\
&\qquad\quad\times\p_{z}\big[\p_\pi u(t,\cL_{\xi},\cL_\vartheta)(\vartheta)\big]\Big)\bigg]\\
&\quad+\mE\bigg[\T\Big(\overline{\p_y\Phi\cdot\sigma_1}(\xi,\cL_\xi)\big[\p_xG(\xi,\cL_\xi)\vartheta+\tilde\mE[\p_\mu G(\xi,\cL_\xi)(\tilde\xi)\tilde\vartheta]\big]^*\\
&\qquad\quad\times\p_{z}\big[\p_\pi u(t,\cL_{\xi},\cL_\vartheta)(\vartheta)\big]\Big)\bigg]\\
&\quad+\mE\bigg[\T\Big(\overline{\delta F\cdot\Phi^*}(\xi,\cL_\xi)
\cdot\p_{z}\big[\p_\pi u(t,\cL_{\xi},\cL_\vartheta)(\vartheta)\big]\Big)\bigg]=0,\\
&u(T,\cL_\xi,\cL_\vartheta)=\varphi(\cL_\vartheta).
\end{aligned} \right.
\end{equation}
Then by Lemma \ref{reh}, we have $\bar F\in \big(C_b^{4,(1,3)}\cap C_b^{4,(2,2)}\cap C_b^{4,(3,1)}\big)(\mR^{d_1}\times\sP_2(\mR^{d_1}))$, $\overline{c\cdot\p_y\Phi}$, $\overline{\sigma_1^*\cdot\p_x\p_y\Phi}, \overline{\p_y\Phi\cdot\sigma_1}, \overline{\delta F\cdot\Phi^*}\in C_b^{3,(2,2)}(\mR^{d_1}\times\sP_2(\mR^{d_1}))$ and $\overline{\overline{c\cdot\p_\nu\Phi}}(x,\mu)(\tilde x)\in C_b^{3,(2,2),3}(\mR^{d_1}\times\sP_2(\mR^{d_1})\times\mR^{d_1})$. Therefore, there exists a unique solution $u\in C_b^{1,(2,1),(3,1)}([0,T]\times\sP_2(\mR^{d_1})\times\sP_2(\mR^{d_1}))$ to the equation (\ref{cp1}) by \cite[Theorem 7.2]{BLPR}, which is given by
\begin{align}\label{cp2}
u(t,\cL_\xi,\cL_\vartheta):=\varphi(\cL_{\bar Z^{t,\xi,\vartheta}_T}).
\end{align}

Now, we are in the position to give:

\begin{proof}
Let $u(t,\cL_\xi,\cL_\vartheta)$ be defined by (\ref{cp2}). Then we have
\begin{align*}
\sK(\eps):=\varphi(\cL_{Z_T^\eps})-\varphi(\cL_{\bar Z_T})=u(T,\cL_{X_T^\eps},\cL_{Z_T^\eps})-u(0,\cL_\xi,\delta_0).
\end{align*}
By It\^o's formula, we derive
\begin{align}\label{J1}
\sK(\eps)&=\mE\bigg(\int_0^T\p_tu(t,\cL_{X_t^\eps},\cL_{Z_t^\eps}) +F(X^{\eps}_t,\cL_{X_t^\eps},Y^{\eps}_t,\cL_{Y^{\eps}_t})\cdot\p_{\mu}u(t,\cL_{X_t^\eps},\cL_{Z_t^\eps})(X^{\eps}_t)\no\\ &\quad+\frac{1}{2}\T\Big(GG^*(X^{\eps}_t,\cL_{X_t^\eps})\cdot\p_{x}\big[\p_{\mu}u(t,\cL_{X_t^\eps},\cL_{Z_t^\eps})(X^{\eps}_t)\big]\Big)\dif t\bigg)\no\\
&\quad+\frac{1}{\eps}\mE\bigg(\int_0^T\big[F(X^{\eps}_t,\cL_{X_t^\eps},Y^{\eps}_t,\cL_{Y^{\eps}_t})-\bar F(X^{\eps}_t,\cL_{X_t^\eps})\big]\cdot\p_\pi u(t,\cL_{X_t^\eps},\cL_{Z_t^\eps})(Z^{\eps}_t)\dif t\bigg)\no\\
&\quad+\mE\bigg(\int_0^T\frac{\bar F(X^{\eps}_t,\cL_{X_t^\eps})-\bar F(\bar X_t,\cL_{\bar X_t})}{\eps} \cdot\p_\pi u(t,\cL_{X_t^\eps},\cL_{Z_t^\eps})(Z^{\eps}_t)\dif t\bigg)\no\\
&\quad+\frac{1}{2}\mE\bigg(\int_0^T\T\bigg(\frac{[G(X^{\eps}_t,\cL_{X_t^\eps})-G(\bar X_t,\cL_{\bar X_t})][G(X^{\eps}_t,\cL_{X_t^\eps})-G(\bar X_t,\cL_{\bar X_t})]^*}{\eps^2}\no\\
&\qquad\quad\times\p_z\big[\p_\pi u(t,\cL_{X_t^\eps},\cL_{Z_t^\eps})(Z^{\eps}_t)\big]\bigg)\dif t\bigg).
\end{align}
Note that by the definition of $\bar F(x,\mu)$, the function
$$
[F(x,\mu,y,\nu)-\bar F(x,\mu)]\cdot\p_\pi u(t,\mu,\pi)(z)
$$
satisfies the centering condition (\ref{cen1}). Recall that $\Phi(x,\mu,y,\nu)$ solves the Poisson equation (\ref{pss2}). Then we define
\begin{align*}
\tilde \Phi (t,x,\mu,y,\nu,z,\pi)=\Phi(x,\mu,y,\nu)\cdot\p_\pi u(t,\mu,\pi)(z),
\end{align*}
and get
\begin{align*}
\cL_0(\mu,y,\nu)\tilde \Phi (t,x,\mu,y,\nu,z,\pi)=-[F(x,\mu,y,\nu)-\bar F(x,\mu)]\cdot\p_\pi u(t,\mu,\pi)(z).
\end{align*}
Moreover, we have $[F-\bar F]\cdot\p_\pi u\in C_b^{1,2,(1,1),2,(1,1),2,(1,1)}$ and $\p_xF(x,\mu,\cdot,\nu)\in C_b^1(\mR^{d_2})$. Consequently, it follows by (\ref{es1}) that
\begin{align*}
&\frac{1}{\eps}\mE\left(\int_0^T[F(X_t^\eps,\cL_{X_t^\eps},Y_t^\eps,\cL_{Y_t^\eps})-\bar F(X_t^\eps,\cL_{X_t^\eps})]\cdot\p_\pi u(t,\cL_{X_t^\eps},\cL_{Z_t^\eps})(Z_t^\eps)\dif t\right)\\
&\leq C_0\,\eps+C_0\bigg[\mE\bigg(\int_0^T
\big(\sL_2+\sL_3\big)
\tilde\Phi(t,X_t^\eps,\cL_{X_t^\eps},Y_t^\eps,\cL_{Y_t^\eps},Z_t^\eps,\cL_{Z_t^\eps})\dif t\bigg)\no\\
&\quad+\mE\bigg(\int_0^T\T\big((G\sigma_1^*)(X_t^\eps,\cL_{X_t^\eps},Y_t^\eps,\cL_{Y_t^\eps})
\cdot\p_x\p_y\tilde\Phi(t,X_t^\eps,\cL_{X_t^\eps},Y_t^\eps,\cL_{Y_t^\eps},Z_t^\eps,\cL_{Z_t^\eps})\big)\dif t\bigg)\no\\
&\quad+\mE\bigg(\int_0^T\T\bigg(\sigma_1(\cL_{X_t^\eps},Y_t^\eps,\cL_{Y_t^\eps})\frac{[G(X_t^\eps,\cL_{X_t^\eps})- G(\bar X_t,\cL_{\bar X_t})]^*}{\eps}\no\\
&\qquad\qquad\times\p_y\p_z\tilde\Phi(t,X_t^\eps,\cL_{X_t^\eps},Y_t^\eps,\cL_{Y_t^\eps},Z_t^\eps,\cL_{Z_t^\eps})\bigg)\dif t\bigg)\no\\
&\quad+\mE\tilde\mE\bigg(\int_0^Tc(\tilde X^{\eps}_t,\cL_{X_t^\eps},\tilde Y^{\eps}_t,\cL_{Y^{\eps}_t})\cdot\p_\nu\tilde\Phi(t,X_t^\eps,\cL_{X_t^\eps},Y_t^\eps,\cL_{Y_t^\eps},Z_t^\eps,\cL_{Z_t^\eps})(\tilde Y^{\eps}_t)\dif t\bigg)\\
&\quad+\mE\tilde\mE\bigg(\int_0^T\delta F(\tilde X^{\eps}_t,\cL_{X_t^\eps},\tilde Y^{\eps}_t,\cL_{Y^{\eps}_t})\cdot\p_\pi\tilde\Phi(t,X_t^\eps,\cL_{X_t^\eps},Y_t^\eps,\cL_{Y_t^\eps},Z_t^\eps,\cL_{Z_t^\eps})(\tilde Z^{\eps}_t)\dif t\bigg)\bigg].
\end{align*}
This together with (\ref{J1}) yields
\begin{align}\label{jes}
&\sK(\eps)
\leq C_0\,\eps+\mE\bigg(\int_0^T\p_tu(t,\cL_{X_t^\eps},\cL_{Z_t^\eps})
+F(X^{\eps}_t,\cL_{X_t^\eps},Y^{\eps}_t,\cL_{Y^{\eps}_t})\cdot\p_{\mu}u(t,\cL_{X_t^\eps},\cL_{Z_t^\eps})(X^{\eps}_t)\no\\ &+\frac{1}{2}\T\Big(GG^*(X^{\eps}_t,\cL_{X_t^\eps})\cdot\p_{x}\big[\p_{\mu}u(t,\cL_{X_t^\eps},\cL_{Z_t^\eps})(X^{\eps}_t)\big]\Big)\dif t\bigg)\no\\
&+\mE\bigg(\int_0^T\frac{\bar F(X^{\eps}_t,\cL_{X_t^\eps})-\bar F(\bar X_t,\cL_{\bar X_t})}{\eps} \cdot\p_\pi u(t,\cL_{X_t^\eps},\cL_{Z_t^\eps})(Z^{\eps}_t)\dif t\bigg)\no\\
&+\frac{1}{2}\mE\bigg(\int_0^T\T\bigg(\frac{[G(X^{\eps}_t,\cL_{X_t^\eps})-G(\bar X_t,\cL_{\bar X_t})][G(X^{\eps}_t,\cL_{X_t^\eps})-G(\bar X_t,\cL_{\bar X_t})]^*}{\eps^2}\no\\
&\qquad\quad\times\p_z\big[\p_\pi u(t,\cL_{X_t^\eps},\cL_{Z_t^\eps})(Z^{\eps}_t)\big]\bigg)\dif t\bigg)\no\\
&+C_0\mE\bigg(\int_0^T
\big(\sL_2+\sL_3\big)
\tilde\Phi(t,X_t^\eps,\cL_{X_t^\eps},Y_t^\eps,\cL_{Y_t^\eps},Z_t^\eps,\cL_{Z_t^\eps})\dif t\bigg)\no\\
&+C_0\mE\bigg(\int_0^T\T\big((G\sigma_1^*)(X_t^\eps,\cL_{X_t^\eps},Y_t^\eps,\cL_{Y_t^\eps})
\cdot\p_x\p_y\tilde\Phi(t,X_t^\eps,\cL_{X_t^\eps},Y_t^\eps,\cL_{Y_t^\eps},Z_t^\eps,\cL_{Z_t^\eps})\big)\dif t\bigg)\no\\
&+C_0\mE\bigg(\int_0^T\T\bigg(\sigma_1(\cL_{X_t^\eps},Y_t^\eps,\cL_{Y_t^\eps})\frac{[G(X_t^\eps,\cL_{X_t^\eps})- G(\bar X_t,\cL_{\bar X_t})]^*}{\eps}\no\\
&\qquad\qquad\times\p_y\p_z\tilde\Phi(t,X_t^\eps,\cL_{X_t^\eps},Y_t^\eps,\cL_{Y_t^\eps},Z_t^\eps,\cL_{Z_t^\eps})\bigg)\dif t\bigg)\no\\
&+C_0\mE\tilde\mE\bigg(\int_0^Tc(\tilde X^{\eps}_t,\cL_{X_t^\eps},\tilde Y^{\eps}_t,\cL_{Y^{\eps}_t})\cdot\p_\nu\tilde\Phi(t,X_t^\eps,\cL_{X_t^\eps},Y_t^\eps,\cL_{Y_t^\eps},Z_t^\eps,\cL_{Z_t^\eps})(\tilde Y^{\eps}_t)\dif t\bigg)\no\\
&+C_0\mE\tilde\mE\bigg(\int_0^T\delta F(\tilde X^{\eps}_t,\cL_{X_t^\eps},\tilde Y^{\eps}_t,\cL_{Y^{\eps}_t})\cdot\p_\pi\tilde\Phi(t,X_t^\eps,\cL_{X_t^\eps},Y_t^\eps,\cL_{Y_t^\eps},Z_t^\eps,\cL_{Z_t^\eps})(\tilde Z^{\eps}_t)\dif t\bigg).
\end{align}
For the last term on the right hand side of the above inequality, it is easy to see that for every fixed $(x,y,z)\in\mR^{d_1}\times\mR^{d_2}\times\mR^{d_1}$, the function
$$
(t,\tilde x,\mu,\tilde y,\nu,\tilde z,\pi)\mapsto \delta F(\tilde x,\mu,\tilde y,\nu)\cdot\p_\pi\tilde\Phi(t,x,\mu,y,\nu,z,\pi)(\tilde z)
$$
satisfies the centering condition. Thus by (\ref{es2}) we have
\begin{align*}
&\mE\tilde\mE\bigg(\int_0^T\delta F(\tilde X^{\eps}_t,\cL_{X_t^\eps},\tilde Y^{\eps}_t,\cL_{Y^{\eps}_t})\cdot\p_\pi\tilde\Phi(t,X_t^\eps,\cL_{X_t^\eps},Y_t^\eps,\cL_{Y_t^\eps},Z_t^\eps,\cL_{Z_t^\eps})(\tilde Z^{\eps}_t)\dif t\bigg)\leq C_0\,\eps.
\end{align*}
Substituting this into (\ref{jes}) and in view of (\ref{cp1}), we derive
\begin{align*}
&\sK(\eps)\\
&\leq C_0\,\eps+\mE\bigg(\int_0^T\big[F(X^{\eps}_t,\cL_{X_t^\eps},Y^{\eps}_t,\cL_{Y^{\eps}_t})-\bar F(X^{\eps}_t,\cL_{X_t^\eps})\big]\cdot\p_{\mu}u(t,\cL_{X_t^\eps},\cL_{Z_t^\eps})(X^{\eps}_t)\dif t\bigg)\\
&\quad+\mE\bigg(\int_0^T\bigg[\frac{\bar F(X_t^\eps,\cL_{X_t^\eps})-\bar F(\bar X_t,\cL_{\bar X_t})}{\eps}-\p_x\bar F(X_t^\eps,\cL_{X_t^\eps})Z_t^\eps-\tilde\mE\big[\p_\mu\bar F(X_t^\eps,\cL_{X_t^\eps})(\tilde X_t^\eps)\tilde Z_t^\eps\big]\bigg]\\
&\qquad\qquad\times\p_\pi u(t,\cL_{X_t^\eps},\cL_{Z_t^\eps})(Z_t^\eps)\dif t\bigg)\\
&\quad+C_0\mE\bigg(\int_0^T
\big[c\cdot\p_y\Phi(X_t^\eps,\cL_{X_t^\eps},Y_t^\eps,\cL_{Y_t^\eps})-\overline{c\cdot\p_y\Phi}(X_t^\eps,\cL_{X_t^\eps})\big]
\cdot\p_\pi u(t,\cL_{X_t^\eps},\cL_{Z_t^\eps})(Z_t^\eps)\dif t\bigg)\\
&\quad+C_0\mE\tilde\mE\bigg(\int_0^T\big[c(\tilde X^{\eps}_t,\cL_{X_t^\eps},\tilde Y^{\eps}_t,\cL_{Y^{\eps}_t})\cdot\p_\nu\Phi(X_t^\eps,\cL_{X_t^\eps},Y_t^\eps,\cL_{Y_t^\eps})(\tilde Y_t^\eps
)-\overline{\overline{c\cdot\p_\nu\Phi}}(X_t^\eps,\cL_{X_t^\eps})(\tilde X_t^\eps)\big]\\
&\qquad\qquad\times\p_\pi u(t,\cL_{X_t^\eps},\cL_{Z_t^\eps})(Z^{\eps}_t)\dif t\bigg)\\
&\quad+C_0\mE\bigg(\int_0^T\big[(G\sigma_1^*)(X_t^\eps,\cL_{X_t^\eps},Y_t^\eps,\cL_{Y_t^\eps})
\cdot\p_x\p_y\Phi(X_t^\eps,\cL_{X_t^\eps},Y_t^\eps,\cL_{Y_t^\eps})\\
&\qquad\qquad-\overline{(G\sigma_1^*)\cdot\p_x\p_y\Phi}(X_t^\eps,\cL_{X_t^\eps})\big]\cdot\p_\pi u(t,\cL_{X_t^\eps},\cL_{Z_t^\eps})(Z_t^\eps)\dif t\bigg)\no\\
&\quad+\frac{1}{2}\mE\bigg(\int_0^T\T\bigg(\bigg[\frac{[G(X^{\eps}_t,\cL_{X_t^\eps})-G(\bar X_t,\cL_{\bar X_t})][G(X^{\eps}_t,\cL_{X_t^\eps})-G(\bar X_t,\cL_{\bar X_t})]^*}{\eps^2}\\
&\qquad\qquad-\big[\p_xG(X_t^\eps,\cL_{X_t^\eps})Z_t^\eps+\tilde\mE[\p_\mu G(X_t^\eps,\cL_{X_t^\eps})(\tilde X_t^\eps)\tilde Z_t^\eps]\big]\big[\p_xG(X_t^\eps,\cL_{X_t^\eps})Z_t^\eps\\
&\qquad\qquad+\tilde\mE[\p_\mu G(X_t^\eps,\cL_{X_t^\eps})(\tilde X_t^\eps)\tilde Z_t^\eps]\big]^*\bigg]\cdot\p_z\big[\p_\pi u(t,\cL_{X_t^\eps},\cL_{Z_t^\eps})(Z^{\eps}_t)\big]\bigg)\dif t\bigg)\\
&\quad+C_0\mE\bigg(\int_0^T\T\Big(\big[\delta F\cdot\Phi^*(X_t^\eps,\cL_{X_t^\eps},Y_t^\eps,\cL_{Y_t^\eps})\\
&\qquad\qquad-\overline{ \delta F\cdot\Phi^*}(X_t^\eps,\cL_{X_t^\eps})\big]\cdot\p_z\big[\p_\pi u(t,\cL_{X_t^\eps},\cL_{Z_t^\eps})(Z_t^\eps)\big]\Big)
\dif t\bigg)\no\\
&\quad+C_0\mE\bigg(\int_0^T\T\bigg(\Big[\p_y\Phi(X_t^\eps,\cL_{X_t^\eps},Y_t^\eps,\cL_{Y_t^\eps})
\cdot\sigma_1(\cL_{X_t^\eps},Y_t^\eps,\cL_{Y_t^\eps})\frac{[G(X_t^\eps,\cL_{X_t^\eps})- G(\bar X_t,\cL_{\bar X_t})]^*}{\eps}\\
&\qquad\qquad-\overline{\p_y\Phi\cdot\sigma_1}(X_t^\eps,\cL_{X_t^\eps})\big[\p_xG(X_t^\eps,\cL_{X_t^\eps})Z_t^\eps
+\tilde\mE[\p_\mu G(X_t^\eps,\cL_{X_t^\eps})(\tilde X_t^\eps)\tilde Z_t^\eps]\big]^*\Big]\\
&\qquad\qquad\times\p_z\big[\p_\pi u(t,\cL_{X_t^\eps},\cL_{Z_t^\eps})(Z_t^\eps)\big]\bigg)\dif t\bigg)=:C_0\,\eps+\sum_{i=1}^8\sK_i(\eps).
\end{align*}
Note that the function
$$
(t,x,\mu,y,\nu,\pi)\mapsto [F(x,\mu,y,\nu)-\bar F(x,\mu)]\cdot\p_\mu u(t,\mu,\pi)(x)
$$
satisfies the centering condition (\ref{cen1}) and belongs to $C_b^{1,2,(1,1),2,(1,1),(1,1)}$.  Then as a direct result of the estimate (\ref{es2}), we have
$$
\sK_1(\eps)\leq C_1\,\eps.
$$
Similarly, according to (\ref{bc1})-(\ref{sigma2}) and the estimate (\ref{es2}), we also have
\begin{align*}
\sK_3(\eps)+\sK_4(\eps)+\sK_5(\eps)+\sK_7(\eps)\leq C_2\,\eps.
\end{align*}
In what follows, we estimate the remaining three terms one by one. Let us first handle $\sK_2(\eps)$ and $\sK_6(\eps)$. Using the mean value theorem and Theorem \ref{main1}, we deduce that
\begin{align*}
&\sK_2(\eps)+\sK_6(\eps)\\
&\leq C_3\int_0^T\mE\bigg|\frac{\bar F(X_t^\eps,\cL_{X_t^\eps})-\bar F(\bar X_t,\cL_{\bar X_t})}{\eps}-\p_x\bar F(X_t^\eps,\cL_{X_t^\eps})Z_t^\eps-\tilde\mE\big[\p_\mu\bar F(X_t^\eps,\cL_{X_t^\eps})(\tilde X_t^\eps)\tilde Z_t^\eps\big]\bigg|\dif t\\
&\quad+C_3\int_0^T\mE\bigg\|\frac{[G(X^{\eps}_t,\cL_{X_t^\eps})-G(\bar X_t,\cL_{\bar X_t})][G(X^{\eps}_t,\cL_{X_t^\eps})-G(\bar X_t,\cL_{\bar X_t})]^*}{\eps^2}\\
&\qquad\qquad-\big[\p_xG(X_t^\eps,\cL_{X_t^\eps})Z_t^\eps+\tilde\mE[\p_\mu G(X_t^\eps,\cL_{X_t^\eps})(\tilde X_t^\eps)\tilde Z_t^\eps]\big]\big[\p_xG(X_t^\eps,\cL_{X_t^\eps})Z_t^\eps\\
&\qquad\qquad+\tilde\mE[\p_\mu G(X_t^\eps,\cL_{X_t^\eps})(\tilde X_t^\eps)\tilde Z_t^\eps]\big]^*\bigg\|\dif t\\
&\leq C_3\int_0^T\big(\mE|X_t^\eps-\bar X_t|^2\big)^\frac{1}{2}\cdot\big(1+\mE|Z_t^\eps|^4\big)\dif t\leq C_3\,\eps.
\end{align*}
As for $\sK_8(\eps)$, using the same technique as above, by the mean value theorem and (\ref{es2}) again, we get that
\begin{align*}
\sK_8(\eps)
&\leq C_4\mE\bigg(\int_0^T\bigg\|\p_y\Phi(X_t^\eps,\cL_{X_t^\eps},Y_t^\eps,\cL_{Y_t^\eps})
\cdot\sigma_1(\cL_{X_t^\eps},Y_t^\eps,\cL_{Y_t^\eps})\bigg[\frac{[G(X_t^\eps,\cL_{X_t^\eps})- G(\bar X_t,\cL_{\bar X_t})]^*}{\eps}\\
&\qquad\quad-\big[\p_xG(X_t^\eps,\cL_{X_t^\eps})Z_t^\eps
+\tilde\mE[\p_\mu G(X_t^\eps,\cL_{X_t^\eps})(\tilde X_t^\eps)\tilde Z_t^\eps]\big]^*\bigg]\bigg\|\dif t\bigg)\\
&\quad+C_4\mE\bigg(\int_0^T\T\bigg(\Big[\p_y\Phi(X_t^\eps,\cL_{X_t^\eps},Y_t^\eps,\cL_{Y_t^\eps})
\cdot\sigma_1(\cL_{X_t^\eps},Y_t^\eps,\cL_{Y_t^\eps})-\overline{\p_y\Phi\cdot\sigma_1}(X_t^\eps,\cL_{X_t^\eps})\Big]\\
&\qquad\qquad\quad\times\big[\p_xG(X_t^\eps,\cL_{X_t^\eps})Z_t^\eps
+\tilde\mE[\p_\mu G(X_t^\eps,\cL_{X_t^\eps})(\tilde X_t^\eps)\tilde Z_t^\eps]\big]^*\\
&\qquad\qquad\quad\times\p_z\big[\p_\pi u(t,\cL_{X_t^\eps},\cL_{Z_t^\eps})(Z_t^\eps)\big]\bigg)\dif t\bigg)\\
&\leq C_4\int_0^T\big(\mE|X_t^\eps-\bar X_t|^2\big)^\frac{1}{2}\cdot\big(1+\mE|Y_t^\eps|^4+\mE|Z_t^\eps|^4\big)\dif t+C_4\,\eps\leq C_4\,\eps.
\end{align*}
Combining the above computations, we arrive at the desired conclusion. Thus the proof is completed.
\end{proof}

\section{Appendix}

In this section, we give an It\^{o} formula under the case of common noises. Let $X_t$ and $Y_t$ are two $d$-dimensional It\^{o} processes, i.e.,
\begin{align*}
&\dif X_t=f(t)\dif t+g_1(t)\dif W_t+g_2(t)\dif B_t, \qquad X_0=\xi,\\
&\dif Y_t=b(t)\dif t+\sigma_1(t)\dif W_t+\sigma_2(t)\dif B_t, \qquad\, Y_0=\eta,
\end{align*}
where $W_t$ and $B_t$ are two independent Brownian motion, and $f(t),b(t), g_i(t),\sigma_i(t)(i=1,2)$ are progressively measurable processes with respect to the filtration $\{\sF_t\}_{t\geq0}$, such that for every $T>0$,
\begin{align*}
\mathbb{E}\Big[\int_{0}^{T}(|f(t)|^{2}+\|g_1(t)\|^{4}+\|g_2(t)\|^{4}+|b(t)|^{2}+\|\sigma_1(t)\|^{4}+\|\sigma_2(t)\|^{4})\dif t\Big]<\infty.
\end{align*}
Then we have:

\begin{lemma}\label{ito}
Assume that $u\in C^{2,(1,1),2,(1,1)}
(\mR^{d}\times\sP_{2}(\mR^{d})\times\mR^{d}\times\sP_{2}(\mR^{d}))$ and for every compact set $\cK\subset \mR^{d}\times\sP_{2}(\mR^{d})\times\mR^{d}\times\sP_{2}(\mR^{d})$,
\begin{align*}
\sup_{(x,\mu,y,\nu)\in\cK}&\bigg[\int_{\mR^d}\Big(|\p_\mu u(x,\mu,y,\nu)(\tilde x)|^2+\|\p_{\tilde x}\p_\mu u(x,\mu,y,\nu)(\tilde x)\|^2\Big)\mu(\dif \tilde x)\\
&+\int_{\mR^d}\Big(|\p_\nu u(x,\mu,y,\nu)(\tilde y)|^2+\|\p_{\tilde y}\p_\nu u(x,\mu,y,\nu)(\tilde y)\|^2\Big)\nu(\dif \tilde y)\bigg]<\infty.
\end{align*}
Then for $\mu_t=\cL_{X_t}$ and $\nu_t=\cL_{Y_t}$, we have $\mathbb{P}$-a.s.,
\begin{align}\label{ito1}
&u(X_t,\mu_t,Y_t,\nu_t)=u(\xi,\cL_{\xi},\eta,\cL_{\eta})+\int_0^tf(s)\cdot\p_xu(X_s,\mu_s,Y_s,\nu_s)\dif s\no\\
&\quad+\frac{1}{2}\int_0^t\T\big([g_1g_1^*+g_2g_2^*](s)\cdot\p^2_xu(X_s,\mu_s,Y_s,\nu_s)\big)\dif s\no\\
&\quad+\int_0^t\p_xu(X_s,\mu_s,Y_s,\nu_s)\cdot g_1(s)\dif W_s\no+\int_0^t\p_xu(X_s,\mu_s,Y_s,\nu_s)\cdot g_2(s)\dif B_s\no\\
&\quad+\int_0^tb(s)\cdot\p_yu(X_s,\mu_s,Y_s,\nu_s)\dif s+\frac{1}{2}\int_0^t\T\big([\sigma_1\sigma_1^*+\sigma_2\sigma_2^*](s)\cdot\p^2_yu(X_s,\mu_s,Y_s,\nu_s)\big)\dif s\no\\
&\quad+\int_0^t\p_yu(X_s,\mu_s,Y_s,\nu_s)\cdot \sigma_1(s)\dif W_s\no+\int_0^t\p_yu(X_s,\mu_s,Y_s,\nu_s)\cdot \sigma_2(s)\dif B_s\no\\
&\quad+\int_0^t\T\big([g_1\sigma_1^*+g_2\sigma_2^*](s)\cdot\p_y\p_xu(X_s,\mu_s,Y_s,\nu_s)\big)\dif s\no\\
&\quad+\int_0^t\tilde\mE\big[\tilde f(s)\cdot\p_\mu u(X_s,\mu_s,Y_s,\nu_s)(\tilde X_s)\big]\dif s\no\\
&\quad+\frac{1}{2}\int_0^t\tilde \mE\big[\T\big([\tilde g_1\tilde g_1^*+\tilde g_2\tilde g_2^*](s)\cdot\p_{\tilde x}\p_\mu u(X_s,\mu_s,Y_s,\nu_s)(\tilde X_s)\big)\big]\dif s\no\\
&\quad+\int_0^t\tilde \mE\big[\tilde b(s)\cdot\p_\nu u(X_s,\mu_s,Y_s,\nu_s)(\tilde Y_s)\big]\dif s\no\\
&\quad+\frac{1}{2}\int_0^t\tilde \mE\big[\T\big([\tilde \sigma_1\tilde \sigma_1^*+\tilde \sigma_2\tilde \sigma_2^*](s)\cdot\p_{\tilde y}\p_\nu u(X_s,\mu_s,Y_s,\nu_s)(\tilde Y_s)\big)\big]\dif s,
\end{align}
where the process $(\tilde X_t,\tilde f(t), \tilde g_1(t),\tilde g_2(t),\tilde Y_t,\tilde b(t), \tilde \sigma_1(t),\tilde \sigma_2(t))$ is a copy of the original process $(X_t,f(t),g_1(t),g_2(t),Y_t,b(t),\sigma_1(t),\sigma_2(t))$ defined on a
copy $(\tilde \Omega,\tilde\sF,\tilde \mP)$ of the original probability space $(\Omega,\sF,\mP)$.
\end{lemma}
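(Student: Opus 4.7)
The plan is to derive (\ref{ito1}) by combining the classical multivariate It\^o formula for the $\mR^{2d}$-valued process $(X_t,Y_t)$ with the chain rule for the deterministic flow of marginal laws $t\mapsto(\mu_t,\nu_t)$ on the Wasserstein space. First I would establish two building blocks separately. \textbf{Building block 1} (spatial It\^o at frozen measure): for fixed $(\mu,\nu)\in\sP_2(\mR^d)\times\sP_2(\mR^d)$, apply the classical It\^o formula to $(x,y)\mapsto u(x,\mu,y,\nu)$ along the $\mR^{2d}$-valued semimartingale $(X_t,Y_t)$. The quadratic variations produce the pure second-order terms in $\p_x^2u$ and $\p_y^2u$ with $g_1g_1^*+g_2g_2^*$ and $\sigma_1\sigma_1^*+\sigma_2\sigma_2^*$ respectively, while the common noise structure (same $W$ and $B$ driving both equations) produces the cross term $\T([g_1\sigma_1^*+g_2\sigma_2^*](s)\cdot\p_y\p_x u)$. \textbf{Building block 2} (measure chain rule): for fixed $(x,y)$, invoke the It\^o formula for flows of marginal laws (see e.g.\ Carmona--Delarue, Vol.~I, Ch.~5, or Buckdahn--Li--Peng--Rainer) applied to $\mu\mapsto u(x,\mu,y,\nu)$ along $\mu_t=\cL_{X_t}$, and analogously to $\nu\mapsto u(x,\mu,y,\nu)$ along $\nu_t=\cL_{Y_t}$. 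Crucially, because $\mu_t$ and $\nu_t$ are \emph{unconditional} marginal laws, each evolves according to its own (deterministic) forward Kolmogorov equation which depends only on the drift and the diffusion matrix of the corresponding process; the common noise does not couple the marginal distributions. This explains why no cross measure-derivative terms $\p_{\tilde y}\p_\mu u$ or $\p_{\tilde x}\p_\nu u$ appear in (\ref{ito1}).

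Next I would combine the two building blocks by a standard partition-refinement argument. Fix a partition $0=t_0<t_1<\cdots<t_n=t$ with mesh $|\pi_n|\to 0$, telescope $u(X_t,\mu_t,Y_t,\nu_t)-u(\xi,\cL_\xi,\eta,\cL_\eta)=\sum_k\Delta_k u$, and on each subinterval write
\[
\Delta_k u=\bigl[u(X_{t_{k+1}},\mu_{t_{k+1}},Y_{t_{k+1}},\nu_{t_{k+1}})-u(X_{t_{k+1}},\mu_{t_k},Y_{t_{k+1}},\nu_{t_k})\bigr]+\bigl[u(X_{t_{k+1}},\mu_{t_k},Y_{t_{k+1}},\nu_{t_k})-u(X_{t_k},\mu_{t_k},Y_{t_k},\nu_{t_k})\bigr].
\]
Apply the measure chain rule to the first bracket (valid since $\mu_s,\nu_s$ are deterministic curves in $\sP_2$) and the classical spatial It\^o to the second (freezing $\mu_{t_k},\nu_{t_k}$, which is admissible because these are deterministic and hence trivially progressively measurable). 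Passing $|\pi_n|\to 0$ and using the continuity of all derivatives of $u$ together with the $L^2$-continuity of $s\mapsto\cW_2(\mu_s,\mu_{t_k})$ yields the asserted formula. A cleaner alternative would be to lift $u$ to $\hat u(x,\Xi,y,\Upsilon):=u(x,\cL_\Xi,y,\cL_\Upsilon)$ on $\mR^d\times L^2(\Omega)\times\mR^d\times L^2(\Omega)$, apply the Banach-space It\^o formula to $\hat u(X_t,X_t',Y_t,Y_t')$ with $(X_t',Y_t')$ an independent copy, and translate Fr\'echet derivatives in the $L^2$-arguments back to Lions derivatives via Riesz representation.

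The \emph{main technical obstacle} will be uniformly controlling the remainders in the partition-refinement step. The proposed integrability bound on $\p_\mu u,\p_{\tilde x}\p_\mu u,\p_\nu u,\p_{\tilde y}\p_\nu u$ over compact sets, combined with the $L^4$-moment hypothesis on $g_i,\sigma_i$ and $L^2$-hypothesis on $f,b$, provides exactly the integrability needed to apply dominated convergence after a localization procedure: localize on the stopping times $\tau_N:=\inf\{t:|X_t|+|Y_t|\geq N\}\wedge T$ so that the trajectories stay in a compact set up to $\tau_N$, derive (\ref{ito1}) on $[0,t\wedge\tau_N]$, and then send $N\to\infty$ using $\tau_N\uparrow T$ a.s.\ together with the moment estimates on the processes. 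A subsidiary subtlety is that the cross term involving $\p_y\p_x u$ with common-noise coefficient $g_1\sigma_1^*+g_2\sigma_2^*$ requires correctly identifying the quadratic covariation $d\langle X,Y\rangle_t=(g_1\sigma_1^*+g_2\sigma_2^*)(t)\,dt$, which follows from the independence of $W$ and $B$ and the orthogonality of their increments.
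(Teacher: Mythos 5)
Your decomposition into a measure--flow chain rule for $(\mu_t,\nu_t)$ plus a spatial It\^{o} step at frozen measure matches the paper's strategy, but your implementation differs in two places, both worth noting. First, the paper does not invoke the measure chain rule as a black box: it re-derives it from scratch via the empirical--measure (finite $N$ particle) argument, applying the classical It\^{o} formula to $u^N(x_1,\dots,x_N,y_1,\dots,y_N):=u(\tfrac1N\sum\delta_{x_k},\tfrac1N\sum\delta_{y_k})$, taking expectations, and sending $N\to\infty$. The reason this extra work is not superfluous is precisely the issue you flag informally: the references you cite establish the chain rule for a \emph{single} measure argument $t\mapsto u(\cL_{X_t})$, whereas here one needs it for the pair $(\cL_{X_t},\cL_{Y_t})$ with $X$ and $Y$ driven by common noise. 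The $N$-particle computation makes your heuristic (``unconditional marginal laws decouple'') rigorous: the would-be cross term $\p_\nu\p_\mu u$ arises with a $1/N$ prefactor and hence vanishes in the limit. If you insist on citing the single-measure chain rule, you must still supply a short argument that the two deterministic curves $t\mapsto\mu_t$ and $t\mapsto\nu_t$ combine without a cross term, which amounts to replicating this computation. Second, the paper replaces your telescoping refinement with a cleaner device: from the measure chain rule it reads off that $U(t,x,y):=u(x,\mu_t,y,\nu_t)$ is $C^1$ in $t$, computes $\p_tU$ explicitly (their eq.\ (6.3)), and then applies the classical time--space It\^{o} formula to $U(t,X_t,Y_t)$; the cross term $\T\big([g_1\sigma_1^*+g_2\sigma_2^*]\,\p_y\p_xU\big)$ falls out of the quadratic covariation $\dif\langle X,Y\rangle_t=(g_1\sigma_1^*+g_2\sigma_2^*)(t)\,\dif t$, exactly as you say. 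Your telescoping route would work but requires redundantly re-establishing partition-limit estimates that the paper outsources to the classical It\^{o} theorem. Finally, a small caveat on your localization: the stopping time $\tau_N$ only confines the spatial variables $(X_t,Y_t)$; the measure arguments $\mu_t=\cL_{X_t}$ are unaffected by stopping. The hypothesis is still usable because $s\mapsto(\mu_s,\nu_s)$ is a $\cW_2$-continuous deterministic curve, hence its image over $[0,T]$ is already compact in $\sP_2\times\sP_2$ without any stopping; your write-up should make that distinction explicit.
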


\begin{proof}
Following the idea of the proof of It\^{o}'s formula in \cite[Proposition 5.102]{CD}, we assume without loss of generality that the derivatives of $u$ are bounded, and $f(t), b(t),g_i(t),\sigma_i(t)(i=1,2)$ are continuous and satisfy
\begin{align*}
\mathbb{E}\Big[\sup_{0\leq t\leq T}(|f(t)|^{2}+\|g_1(t)\|^{4}+\|g_2(t)\|^{4}+|b(t)|^{2}+\|\sigma_1(t)\|^{4}+\|\sigma_2(t)\|^{4})\Big]<\infty.
\end{align*}
For every $k\geq1$, let $(X_t^k,Y_t^k)$ be independent copies of $(X_t,Y_t)$, namely,
\begin{align*}
&\dif X^k_t=f^k(t)\dif t+g_1^k(t)\dif W^k_t+g_2^k(t)\dif B^k_t,~~ X^k_0=\xi^k,\\
&\dif Y^k_t=b^k(t)\dif t+\sigma_1^k(t)\dif W^k_t+\sigma_2^k(t)\dif B^k_t,~~ Y^k_0=\eta^k.
\end{align*}
Denote by $\mu_t^N:=\frac{1}{N}\sum_{k=1}^N\delta_{X^k_t}$ and $\nu_t^N:=\frac{1}{N}\sum_{k=1}^N\delta_{Y^k_t}$ the empirical measures of $(X^k_t)_{1\leq k\leq N}$ and $(Y^k_t)_{1\leq k\leq N}$, respectively. Then we define
\begin{align*}
u^N(x_1,\cdots,x_N,y_1,\cdots,y_N)=u\bigg(\frac{1}{N}\sum_{k=1}^N\delta_{x_k},\frac{1}{N}\sum_{k=1}^N\delta_{y_k}\bigg).
\end{align*}
Using the classical It\^{o}'s formula, we derive
\begin{align*}
&u(\mu_t^N,\nu_t^N)=u^N(X^1_t,\cdots,X^N_t,Y^1_t,\cdots,Y^N_t)\\
&=u^N(\xi^1,\cdots,\xi^N,\eta^1,\cdots,\eta^N)
+\int_0^t\frac{1}{N}\sum_{k=1}^N\p_\mu u(\mu_s^N,\nu_s^N)(X^k_s)\cdot f^k(s)\dif s\\
&\quad+\frac{1}{N}\sum_{k=1}^N\int_0^t\p_\mu u(\mu_s^N,\nu_s^N)(X^k_s)\cdot g_1^k(s)\dif W_s^k+\frac{1}{N}\sum_{k=1}^N\int_0^t\p_\mu u(\mu_s^N,\nu_s^N)(X^k_s)\cdot g_2^k(s)\dif B_s^k\\
&\quad+\frac{1}{2N}\int_0^t\sum_{k=1}^N\T\big([g_1^kg_1^{k,*}+g_2^kg_2^{k,*}](s)\cdot\p_{\tilde x}\p_\mu u(\mu_s^N,\nu_s^N)(X^k_s)\big)\dif s\\
&\quad+\frac{1}{2N^2}\int_0^t\sum_{k=1}^N\T\big([g_1^kg_1^{k,*}+g_2^kg_2^{k,*}](s)\cdot\p^2_\mu u(\mu_s^N,\nu_s^N)(X^k_s,X^k_s)\big)\dif s\\
&\quad+\int_0^t\frac{1}{N}\sum_{k=1}^N\p_\nu u(\mu_s^N,\nu_s^N)(Y^k_s)\cdot b^k(s)\dif s\\
&\quad+\frac{1}{N}\sum_{k=1}^N\int_0^t\p_\nu u(\mu_s^N,\nu_s^N)(Y^k_s)\cdot \sigma_1^k(s)\dif W_s^k+\frac{1}{N}\sum_{k=1}^N\int_0^t\p_\nu u(\mu_s^N,\nu_s^N)(Y^k_s)\cdot \sigma_2^k(s)\dif B_s^k\\
&\quad+\frac{1}{2N}\int_0^t\sum_{k=1}^N\T\big([\sigma_1^k\sigma_1^{k,*}+\sigma_2^k\sigma_2^{k,*}](s)\cdot\p_{\tilde y}\p_\nu u(\mu_s^N,\nu_s^N)(Y^k_s)\big)\dif s\\
&\quad+\frac{1}{2N^2}\int_0^t\sum_{k=1}^N\T\big([\sigma_1^k\sigma_1^{k,*}+\sigma_2^k\sigma_2^{k,*}](s)\cdot\p^2_\nu u(\mu_s^N,\nu_s^N)(Y^k_s,Y^k_s)\big)\dif s\\
&\quad+\frac{1}{N^2}\int_0^t\sum_{k=1}^N\T\big([g_1^k\sigma_1^{k,*}+g_2^k\sigma_2^{k,*}](s)\cdot\p_\nu\p_\mu u(\mu_s^N,\nu_s^N)(X^k_s,Y^k_s)\big)\dif s.
\end{align*}
Taking expectation from both sides of the above equality (the stochastic
integrals have zero expectation due to the properties of martingale) gives
\begin{align*}
\mE u(\mu_t^N,\nu_t^N)%&=\mE u(\mu_0^N,\nu_0^N)
%+\frac{1}{N}\sum_{k=1}^N\int_0^t\mE\big[\p_\mu u(\mu_s^N,\nu_s^N)(X^k_s)\cdot f^k(s)\big]\dif s\\
%&\quad+\frac{1}{2N}\sum_{k=1}^N\int_0^t\mE\big[\T\big([g_1^kg_1^{k,*}+g_2^kg_2^{k,*}](s)\cdot\p_{\tilde x}\p_\mu u(\mu_s^N,\nu_s^N)(X^k_s)\big)\big]\dif s\\
%&\quad+\frac{1}{2N^2}\sum_{k=1}^N\int_0^t\mE\big[\T\big([g_1^kg_1^{k,*}+g_2^kg_2^{k,*}](s)\cdot\p^2_\mu u(\mu_s^N,\nu_s^N)(X^k_s,X^k_s)\big)\big]\dif s\\
%&\quad+\frac{1}{N}\sum_{k=1}^N\int_0^t\mE\big[\p_\nu u(\mu_s^N,\nu_s^N)(Y^k_s)\cdot b^k(s)\big]\dif s\\
%&\quad+\frac{1}{2N}\sum_{k=1}^N\int_0^t\mE\big[\T\big([\sigma_1^k\sigma_1^{k,*}+\sigma_2^k\sigma_2^{k,*}](s)\cdot\p_{\tilde y}\p_\nu u(\mu_s^N,\nu_s^N)(Y^k_s)\big)\big]\dif s\\
%&\quad+\frac{1}{2N^2}\sum_{k=1}^N\int_0^t\mE\big[\T\big([\sigma_1^k\sigma_1^{k,*}+\sigma_2^k\sigma_2^{k,*}](s)\cdot\p^2_\nu u(\mu_s^N,\nu_s^N)(Y^k_s,Y^k_s)\big)\big]\dif s\\
%&\quad+\frac{1}{N^2}\sum_{k=1}^N\int_0^t\mE\big[\T\big([g_1^k\sigma_1^{k,*}+g_2^k\sigma_2^{k,*}](s)\cdot\p_\nu\p_\mu u(\mu_s^N,\nu_s^N)(X^k_s,Y^k_s)\big)\big]\dif s\\
&=\mE u(\mu_0^N,\nu_0^N)
+\int_0^t\mE\big[\p_\mu u(\mu_s^N,\nu_s^N)(X^1_s)\cdot f^1(s)\big]\dif s\\
&\quad+\frac{1}{2}\int_0^t\mE\big[\T\big([g_1^1g_1^{1,*}+g_2^1g_2^{1,*}](s)\cdot\p_{\tilde x}\p_\mu u(\mu_s^N,\nu_s^N)(X^1_s)\big)\big]\dif s\\
&\quad+\frac{1}{2N}\int_0^t\mE\big[\T\big([g_1^1g_1^{1,*}+g_2^1g_2^{1,*}](s)\cdot\p^2_\mu u(\mu_s^N,\nu_s^N)(X^1_s,X^1_s)\big)\big]\dif s\\
&\quad+\int_0^t\mE\big[\p_\nu u(\mu_s^N,\nu_s^N)(Y^1_s)\cdot b^1(s)\big]\dif s\\
&\quad+\frac{1}{2}\int_0^t\mE\big[\T\big([\sigma_1^1\sigma_1^{1,*}+\sigma_2^1\sigma_2^{1,*}](s)\cdot\p_{\tilde y}\p_\nu u(\mu_s^N,\nu_s^N)(Y^1_s)\big)\big]\dif s\\
&\quad+\frac{1}{2N}\int_0^t\mE\big[\T\big([\sigma_1^1\sigma_1^{1,*}+\sigma_2^1\sigma_2^{1,*}](s)\cdot\p^2_\nu u(\mu_s^N,\nu_s^N)(Y^1_s,Y^1_s)\big)\big]\dif s\\
&\quad+\frac{1}{N}\int_0^t\mE\big[\T\big([g_1^1\sigma_1^{1,*}+g_2^1\sigma_2^{1,*}](s)\cdot\p_\nu\p_\mu u(\mu_s^N,\nu_s^N)(X^1_s,Y^1_s)\big)\big]\dif s,
\end{align*}
where we have used that $(Y_t^k,X^k_t)_{1\leq k\leq N}$ are independently identically distributed. We know from \cite{CD} that $\mP$-a.s.
\begin{align*}
\cW_2(\mu_t^N,\mu_t)\to0,~~\cW_2(\nu_t^N,\nu_t)\to0, ~~~as~N\to\infty.
\end{align*}
Taking limit $N\to\infty$ yields
\begin{align}\label{ito2}
&u(\mu_t,\nu_t)
=u(\cL_{\xi},\cL_{\eta})+\int_0^t\mE\big[\p_\mu u(\mu_s,\nu_s)(X_s)\cdot f(s)\big]\dif s\no\\
&\quad+\frac{1}{2}\int_0^t\mE\big[\T\big([g_1g_1^*+g_2g_2^*](s)\cdot\p_{\tilde x}\p_\mu u(\mu_s,\nu_s)(X_s)\big)\big]\dif s\no\\
&\quad+\int_0^t\mE\big[\p_\nu u(\mu_s,\nu_s)(Y_s)\cdot b(s)\big]\dif s+\frac{1}{2}\int_0^t\mE\big[\T\big([\sigma_1\sigma_1^*+\sigma_2\sigma_2^*](s)\cdot\p_{\tilde y}\p_\nu u(\mu_s,\nu_s)(Y_s)\big)\big]\dif s.
\end{align}
Then we define the function:
$$
U(t,x,y)=u(x,\mu_t,y,\nu_t).
$$
For any $t\geq0$ and $h>0$, by (\ref{ito2}) we compute
\begin{align*}
&U(t+h,x,y)-U(t,x,y)=u(x,\mu_{t+h},y,\nu_{t+h})-u(x,\mu_t,y,\nu_t)\\
&=\int_t^{t+h}\mE\big[\p_\mu u(x,\mu_s,y,\nu_s)(X_s)\cdot f(s)\big]\dif s\no\\
&\quad+\frac{1}{2}\int_t^{t+h}\mE\big[\T\big([g_1g_1^*+g_2g_2^*](s)\cdot\p_{\tilde x}\p_\mu u(x,\mu_s,y,\nu_s)(X_s)\big)\big]\dif s\no\\
&\quad+\int_t^{t+h}\mE\big[\p_\nu u(x,\mu_s,y,\nu_s)(Y_s)\cdot b(s)\big]\dif s\\
&\quad+\frac{1}{2}\int_t^{t+h}\mE\big[\T\big([\sigma_1\sigma_1^*+\sigma_2\sigma_2^*](s)\cdot\p_{\tilde y}\p_\nu u(x,\mu_s,y,\nu_s)(Y_s)\big)\big]\dif s.
\end{align*}
Consequently, we have that $U$ is differentiable in $t$ and
\begin{align}\label{ito3}
\p_tU(t,x,y)&=\mE\big[\p_\mu u(x,\mu_t,y,\nu_t)(X_t)\cdot f(t)\big]+\mE\big[\p_\nu u(x,\mu_t,y,\nu_t)(Y_t)\cdot b(t)\big]\no\\
&\quad+\frac{1}{2}\mE\big[\T\big([g_1g_1^*+g_2g_2^*](t)\cdot\p_{\tilde x}\p_\mu u(x,\mu_t,y,\nu_t)(X_t)\big)\big]\no\\
&\quad+\frac{1}{2}\mE\big[\T\big([\sigma_1\sigma_1^*+\sigma_2\sigma_2^*](t)\cdot\p_{\tilde y}\p_\nu u(x,\mu_t,y,\nu_t)(Y_t)\big)\big].
\end{align}
Applying the classical It\^{o} formula to $U$, we get
\begin{align*}
U(t,X_t,Y_t)&=U(0,\xi,\eta)+\int_0^t\p_tU(s,X_s,Y_s)\dif s+\int_0^tf(s)\cdot\p_x U(s,X_s,Y_s)\dif s\\
&\quad+\frac{1}{2}\int_0^t\T\big([g_1g_1^*+g_2g_2^*](s)\cdot\p^2_xU(s,X_s,Y_s)\big)\dif s\no\\
&\quad+\int_0^t\p_xU(s,X_s,Y_s)\cdot g_1(s)\dif W_s\no+\int_0^t\p_xU(s,X_s,Y_s)\cdot g_2(s)\dif B_s\no\\
&\quad+\int_0^tb(s)\cdot\p_yU(s,X_s,Y_s)\dif s+\frac{1}{2}\int_0^t\T\big([\sigma_1\sigma_1^*+\sigma_2\sigma_2^*](s)\cdot\p^2_yU(s,X_s,Y_s)\big)\dif s\no\\
&\quad+\int_0^t\p_yU(s,X_s,Y_s)\cdot \sigma_1(s)\dif W_s\no+\int_0^t\p_yU(s,X_s,Y_s)\cdot \sigma_2(s)\dif B_s\no\\
&\quad+\int_0^t\T\big([g_1\sigma_1^*+g_2\sigma_2^*](s)\cdot\p_y\p_xU(s,X_s,Y_s)\big)\dif s,
\end{align*}
which together with (\ref{ito3}) implies the desired result.
\end{proof}

\bigskip

\end{document}